\newtheorem{theor}{Theorem}
\newtheorem{thm}{Theorem}[section]
\newtheorem{lem}[thm]{Lemma}
\newtheorem{cor}[thm]{Corollary}
\newtheorem{prop}[thm]{Proposition}
\theoremstyle{definition}
\newtheorem{defn}[thm]{Definition}
\newtheorem{notation}[thm]{Notation}
\newtheorem{conv}[thm]{Convention}
\newtheorem{rem}[thm]{Remark}
\newtheorem{propdfn}[thm]{Proposition-Definition}
\def\strutdepth{\dp\strutbox}
\def \ss{\strut\vadjust{\kern-\strutdepth \sss}}
\def \sss{\vtop to \strutdepth{
\baselineskip\strutdepth\vss\llap{$\diamondsuit\;\;$}\null}}
\def\strutdepth{\dp\strutbox}
\def \sst{\strut\vadjust{\kern-\strutdepth \ssss}}
\def \ssss{\vtop to \strutdepth{
\baselineskip\strutdepth\vss\llap{$\spadesuit\;\;$}\null}}
\def\strutdepth{\dp\strutbox}
\def \ssh{\strut\vadjust{\kern-\strutdepth \sssh}}
\def \sssh{\vtop to \strutdepth{
\baselineskip\strutdepth\vss\llap{$\heartsuit\;\;$}\null}}
\newcommand{\R}{\mathbb R}
\newcommand{\Z}{\mathbb Z}
\begin{document}

\title[Geometric Intersection Number]{Geometric Intersection Number and analogues of the Curve Complex for free
groups}

\author[I.~Kapovich]{Ilya Kapovich}

\address{\tt Department of Mathematics, University of Illinois at
Urbana-Champaign, 1409 West Green Street, Urbana, IL 61801, USA
\newline http://www.math.uiuc.edu/\~{}kapovich/} \email{\tt
kapovich@math.uiuc.edu}

\author[M.~Lustig]{Martin Lustig}\address{\tt Math\'ematiques (LATP),
Universit\'e Paul C\'ezanne - Aix Marseille III\\ ave.  Escadrille
Normandie-Ni\'emen, 13397 Marseille 20, France} \email{\tt
Martin.Lustig@univ-cezanne.fr}

\begin{abstract}

For the free group $F_{N}$ of finite rank $N \geq 2$ we construct a canonical Bonahon-type,
continuous and $Out(F_N)$-invariant \emph{geometric intersection form}
$$\langle \ ,\ \rangle:
\overline{cv}(F_N)\times Curr(F_N)\to \mathbb R_{\ge 0}.$$

Here $\overline{cv}(F_N)$ is the closure of
unprojectivized Culler-Vogtmann's Outer space $cv(F_N)$ in the equivariant
Gromov-Hausdorff convergence topology (or, equivalently, in the length
function topology). It is known that $\overline{cv}(F_N)$ consists of
all \emph{very small} minimal isometric actions of $F_N$ on $\mathbb
R$-trees.  The projectivization of $\overline{cv}(F_N)$ provides a
free group analogue of Thurston's compactification of the Teichm\"uller
space.

As an application, using the \emph{intersection graph} determined by
the intersection form, we show that several natural analogues of the
curve complex in the free group context have infinite diameter.
\end{abstract}

\thanks{The first author was supported by the NSF grants DMS-0404991
and DMS-0603921.  Both authors acknowledge the support of MSRI
Berkeley a part of the special semester in Geometric Group Theory}

\subjclass[2000]{Primary 20F, Secondary 57M, 37B, 37D}

\maketitle

\section{Introduction}

The notion of an intersection number plays a crucial role in the study
of Teichm\"uller space, mapping class groups, and their
applications to 3-manifold topology.  Thurston~\cite{Th} extended the
notion of a geometric intersection number between two free homotopy
classes of closed curves on a surface to the notion of a
{\em geometric intersection number} between two measured geodesic laminations.  Indeed, this
intersection number is a central concept in
the study of Thurston's compactification of the Teichm\"uller space, as
well as in the study of the dynamics and geometry of surface homeomorphisms.
Bonahon~\cite{Bo86} extended this notion of geometric intersection number to
the case of two geodesic currents on the surface.  Bonahon also
constructed~\cite{Bo88} a mapping class group equivariant embedding of
Thurston's compactification of the Teichm\"uller space into the space
of projectivized geodesic currents.

Culler and Vogtmann introduced in \cite{CV} a free-group analogue of
Teichm\"uller space, which has been termed {\em Outer space} by Shalen and is denoted here by
$CV(F_N)$ (where $F_N$ is a free group of finite rank $N\ge 2$).
Whereas points in Teichm\"uller space can be thought of as free
and discrete isometric actions of the surface group on $\mathbb H^2$,
points in $CV(F_N)$ are represented by minimal
free and discrete isometric actions of $F_N$ on $\mathbb R$-trees
with normalized co-volume (that is, where the quotient graph has volume $1$). One also often works
with the \emph{unprojectivized Outer space} $cv(F_N)$,
which contains a copy of $CV(F_{N})$, and consists of all actions of the above type with arbitrary co-volume.
More details are given in Section~\ref{sect:cv} below.

\smallskip

Let $\overline{cv}(F_N)$ be the
closure of $cv(F_N)$ in the equivariant Gromov-Hausdorff topology. It
is known~\cite{CL,BF93} that $\overline{cv}(F_N)$ consists precisely
of all the minimal \emph{very small} nontrivial isometric actions on $F_N$ on
$\mathbb R$-trees (see Section~\ref{sect:cv} for definitions). Projectivizing $\overline{cv}(F_N)$ gives a Thurston-type compactification
$\overline{CV}(F_{N})=CV(F_N)\cup \partial CV(F_N)$ of Outer Space, where $\partial CV(F_N)$ is the \emph{Thurston boundary} of $CV(F_N)$.
The outer automorphism group $Out(F_{N})$ of $F_{N}$ acts on ${CV}(F_{N})$ and
$\overline{CV}(F_{N})$ in very close analogy
to the action of the mapping class group on Teichm\"uller
space and its Thurston compactification. One can regard $\partial cv(F_N)=\overline{cv}(F_N)-cv(F_N)$ as the \emph{boundary} of $cv(F_N)$.
The Thurston boundary $\partial CV(F_N)$ is obtained by projectivizing $\partial cv(F_N)$.

\smallskip

The structure of Outer space and of $Out(F_N)$ is more complicated
than that of the Teichm\"uller space and the mapping class
group. In large part this is due to the fact
most free group automorphisms are not ``geometric'', in the
sense that they are not induced by surface homeomorphisms.
Although finite dimensional, $CV(F_{N})$ is not a manifold, and
hence none of the tools from complex analysis which are so useful for
surfaces can be directly carried over into the free group world.
Moreover, while the topological and homotopy properties of  Outer space are
fairly well understood, very little is known about the geometry of
$CV(F_N)$.  One of the reasons for this has been the lack, until
recently, of a good geometric intersection theory in the Outer space context.

\smallskip

A \emph{geodesic current} is a measure-theoretic generalization of the
notion of a conjugacy class of a group element or of a free homotopy
class of a closed curve on a surface (see Definition~\ref{defn:current} below).  Much of the motivation for
studying currents comes from the work of Bonahon about geodesic
currents on hyperbolic surfaces~\cite{Bo86,Bo88}.  The space
$Curr(F_N)$ of all geodesic currents has a useful linear structure and
admits a canonical $Out(F_N)$-action.  The space $Curr(F_N)$ turns out
to be a natural companion of the Outer space and contains additional
valuable information about the geometry and dynamics of free group
automorphisms.  Examples of such applications can be found in
\cite{Bo91,CHL3,Fra,Ka1,Ka2,Ka3,KL1,KN,KKS,Ma} and other sources.

\smallskip

In \cite{Ka2,L} we introduced a Bonahon-type, continuous, and
$Out(F_N)$-invariant \emph{geometric intersection form}
$$\langle \ ,\ \rangle:
cv(F_N)\times Curr(F_N)\to \mathbb R_{\ge 0}.$$
The geometric intersection form $\langle \ , \ \rangle$
is $\mathbb R_{>0}$-homogeneous with respect to the first argument,
$\mathbb R_{\ge 0}$-linear with respect to the second argument and is $Out(F_N)$-equivariant. This
intersection form has the following crucial property in common with
Bonahon's notion of an intersection number between two geodesic
currents on a surface:

For any $\mathbb R$-tree $T\in cv(F_N)$ and for any
$g\in F_N \smallsetminus \{1\}$ we have $\langle T, \eta_g\rangle =||g||_T$.  Here
$\eta_g$ is the \emph{counting current} of $g$ (see
Definition~\ref{defn:rational}) and $||g||_T$ is the translation
length of $g$ on the $\mathbb R$-tree $T$.  Since the scalar multiples
of all counting currents form a dense set in $Curr(F_N)$, there is at
most one continuous intersection form with the above properties, so
that $\langle \ , \ \rangle$ is in fact canonical.
Kapovich proved~\cite{Ka2} that the intersection form $\langle \ , \ \rangle$
does not admit a ``reasonable" continuous $Out(F_N)$-invariant
symmetric extension to a map $Curr(F_N)\times Curr(F_N)\to\mathbb R$.

The main result of this paper is that the geometric intersection form $\langle \ , \ \rangle$ admits a continuous extension to the boundary of $cv(F)$. We present a simplified form of this result here and refer to Theorem~\ref{thm:int-form} below
for a more detailed statement.

\begin{theor}\label{thm:A}
Let $N\ge 2$.  There exists a unique continuous map \[\langle \ , \
\rangle: \overline{cv}(F_N)\times Curr(F_N)\to \mathbb R_{\ge 0}\]
which is $R_{\ge 0}$-homogeneous in the first argument, $R_{\ge 0}$-linear in
 the second argument, $Out(F_N)$-invariant, and such that for every $T\in \overline{cv}(F_N)$ and every $g\in F_N\smallsetminus \{1\}$ we have
 \[
\langle T, \eta_g\rangle=||g||_T.
 \]
\end{theor}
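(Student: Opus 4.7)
My plan is to first dispose of uniqueness, then construct the extension on a dense set in the second variable where the values are forced, and finally establish joint continuity; the heart of the argument will be a uniform estimate linking convergence in $\overline{cv}(F_N)$ with the cylinder-set description of $\langle T,\mu\rangle$ on $cv(F_N)$.

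\textbf{Uniqueness.} Suppose $\langle\,,\,\rangle$ and $\langle\,,\,\rangle'$ are two maps satisfying the conclusion. Fix $T \in \overline{cv}(F_N)$. By $\mathbb{R}_{\ge 0}$-linearity in the second argument and the hypothesis $\langle T,\eta_g\rangle = \|g\|_T = \langle T,\eta_g\rangle'$, the two functionals agree on the $\mathbb{R}_{\ge 0}$-span of counting currents; since this span is dense in $Curr(F_N)$ and both functionals are continuous, they agree on $\{T\}\times Curr(F_N)$, hence everywhere.

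\textbf{Existence.} The natural definition of the extension at a boundary tree $T \in \partial cv(F_N)$ uses a sequence $T_n \to T$ with $T_n \in cv(F_N)$ and sets $\langle T,\mu\rangle := \lim_n \langle T_n,\mu\rangle$. On rational currents $\mu = \sum_i c_i \eta_{g_i}$ this limit equals $\sum_i c_i \|g_i\|_T$ by the given identity $\langle T_n,\eta_g\rangle = \|g\|_{T_n}$ together with the convergence of length functions (recall that length functions separate points of $\overline{cv}(F_N)$ and that convergence in $\overline{cv}(F_N)$ is convergence of translation length functions). So the value of $\langle T,\cdot\rangle$ is forced on a dense subset of $Curr(F_N)$; it then remains to extend continuously to $Curr(F_N)$ and to verify joint continuity on $\overline{cv}(F_N)\times Curr(F_N)$. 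I would carry this out by exhibiting, for each compact neighborhood $K\subset \overline{cv}(F_N)$ of a prescribed $T^*$, a uniform estimate of the form
\[
|\langle T, \mu\rangle - \langle T, \mu'\rangle| \le C(K)\cdot \delta(\mu,\mu'),
\]
valid for all $T\in K\cap cv(F_N)$, where $\delta$ is a suitable seminorm on $Curr(F_N)$ (for instance, the supremum over the cylinder measures of an arbitrary but fixed finite collection of words of bounded length). Such an equicontinuity statement, combined with the already-known continuity of $\langle \,,\,\rangle$ on $cv(F_N)\times Curr(F_N)$, forces the limit $\lim_n\langle T_n,\mu\rangle$ to exist and to be independent of the sequence $T_n\to T$, and makes the resulting map on $\overline{cv}(F_N)\times Curr(F_N)$ jointly continuous.

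The main technical input is a combinatorial formula for $\langle T,\mu\rangle$ on $cv(F_N)$ in terms of a marked graph $\Gamma_T$ representing $T$: namely, up to choice of orientation,
\[
\langle T,\mu\rangle \;=\; \sum_{e\in E^+\Gamma_T} \ell_T(e)\,\mu(\mathrm{Cyl}(e)),
\]
where $\mathrm{Cyl}(e)$ is the cylinder set of bi-infinite geodesic lines crossing a chosen lift of the edge $e$. This exhibits $\mu\mapsto \langle T,\mu\rangle$ as a finite nonnegative linear combination of weak-$\ast$ continuous evaluations, from which the required equicontinuity bounds follow as long as one has uniform control over the finite set of edges and their lengths.

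\textbf{Main obstacle.} The principal difficulty is that a typical $T\in\partial cv(F_N)$ is \emph{not} simplicial—it may be dual to a measured lamination on a surface, or have nontrivial (very small) arc stabilizers—so no single marked graph $\Gamma_T$ is available to apply the cylinder formula directly. I expect the bulk of the work to lie in constructing a suitable approximation $T_n\to T$ by simplicial trees whose marked graphs $\Gamma_{T_n}$ (for instance obtained by folding, blow-up, or equivariant Gromov--Hausdorff approximation) allow one to control the cylinder masses $\mu(\mathrm{Cyl}(e))$ uniformly in $n$ while the edge lengths $\ell_{T_n}(e)$ of edges that ``collapse in the limit'' decay fast enough that their contributions to $\langle T_n,\mu\rangle$ vanish. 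Once this is achieved the Cauchy property of $\langle T_n,\mu\rangle$ is immediate, the limit is independent of the approximating sequence by the forced-values computation on rational currents, and the joint continuity follows formally.
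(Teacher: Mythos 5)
Your uniqueness argument and the observation that the values $\langle T,\mu\rangle$ are forced on rational currents are correct and match the paper. The gap is in the step you yourself flag as the heart of the matter: the claimed uniform estimate $|\langle T,\mu\rangle-\langle T,\mu'\rangle|\le C(K)\,\delta(\mu,\mu')$, valid for all $T\in K\cap cv(F_N)$ with $\delta$ a \emph{fixed} finite-cylinder seminorm, is neither proved nor plausible as stated. For a simplicial $T$ the functional $\mu\mapsto\langle T,\mu\rangle$ is a positive combination of the edge-cylinder evaluations of the marking $\Gamma_T$; but a neighborhood $K$ of a boundary tree typically meets infinitely many open simplices of $cv(F_N)$ (for instance the trees $\lambda^{-n}\varphi^{-n}T_0$ converging to the repelling tree of an iwip $\varphi$ lie in simplices whose markings differ by $\varphi^{-n}$), and the edge cylinders of these markings are cylinders of words of unbounded length in the fixed basis $A$. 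Such evaluation functionals are not controlled by any fixed finite family of bounded-length cylinder evaluations, so no single seminorm $\delta$ of the kind you propose can serve for all $T\in K\cap cv(F_N)$. Your fallback in the ``main obstacle'' paragraph --- simplicial approximations in which the edges that ``collapse in the limit'' have decaying length and controlled cylinder mass --- has no traction precisely in the hard cases: when $T$ has dense orbits (e.g.\ stable trees of iwips) there are no surviving edges at all, and the sketch produces no uniform control.

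What is missing is the paper's key input: the Bounded Back-Tracking property for very small actions \cite{GJLL}, applied to the limit tree $T$ itself, which yields the Uniform Scaling Approximation (Proposition~\ref{prop:lgl}): for every $\epsilon>0$ there is a neighborhood $U_\epsilon$ of $T$ in $\overline{cv}(F_N)$ such that $\bigl|\,||w||_{T_1}-||w||_{T_2}\bigr|\le \epsilon\,||w||_A$ for \emph{all} $w\in F_N$ and all $T_1,T_2\in U_\epsilon$. Note the shape of this estimate: it is equicontinuity in the \emph{tree} variable, with modulus the fixed reference length $||\cdot||_A$ (equivalently the single functional $\langle T_A,\cdot\rangle$), not equicontinuity in the current variable with a finite cylinder seminorm. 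Its proof cuts a cyclically reduced $w$ into blocks of a fixed length $M$, compares $||w||_{T_i}$ with the sum of point displacements of the blocks up to an error of order $BBT\cdot ||w||_A/M$, and then only needs equivariant Gromov--Hausdorff closeness on the finitely many words of length at most $M$. Once this is available, everything you outline does follow by routine bookkeeping: the sequence $\lambda_i||g_i||_T$ is Cauchy (compare it with $\lambda_i||g_i||_{T'}$ for an interior $T'\in U_\epsilon\cap cv(F_N)$), the boundary value is independent of the approximating rational currents, and joint continuity follows from $|\langle T_1,\nu\rangle-\langle T_2,\nu\rangle|\le 2\epsilon\langle T_A,\nu\rangle$ together with continuity of $\langle T_0,\cdot\rangle$ at $\mu$ for a nearby interior $T_0$. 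Without a statement of this strength, valid for arbitrary very small boundary trees, the existence and continuity half of your argument does not go through.
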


It is easy to see that the map $\langle \
, \ \rangle$ in Theorem~\ref{thm:A} coincides with the intersection
form from~\cite{Ka2}, when restricted to ${cv}(F_N)\times Curr(F_N)$.

\smallskip

Note that a very different and symmetric notion of an intersection
number between two elements of $\overline{cv}(F_N)$ was introduced and
studied by Guirardel~\cite{Gui2}.  However, Guiradel's intersection
number often takes on the value $\infty$, and it is fairly difficult to use.

\smallskip

A key ingredient in the proof of Theorem~\ref{thm:A} is
Proposition~\ref{prop:lgl} below, which establishes a ``Uniform Scaling
Approximation Property" for points in $\overline{cv}(F_N)$.  It is
clear that Proposition~\ref{prop:lgl} should have further useful
applications in the study of the boundary of the Outer space.  The
proof of Proposition~\ref{prop:lgl} in turn relies on the
\emph{Bounded Back-Tracking Property} for very small actions of $F_N$
on $\mathbb R$-trees, established by Gaboriau, Jaeger, Levitt, and
Lustig in \cite{GJLL}.

\medskip

Recall that for a closed hyperbolic
surface $S$ the \emph{curve graph} $\mathcal C(S)$ is defined as
follows.  The vertices of $\mathcal C(S)$ are free homotopy classes of
essential simple closed curves on $S$.  Two distinct vertices of
$\mathcal C(S)$ are adjacent in $\mathcal C(S)$ if they can be
realized by disjoint simple closed curves.  The curve graph turned out
to be a valuable tool in the study of the mapping class groups, of
Kleinian groups, and in various applications to 3-manifolds.
Masur-Minsky~\cite{MM} and Hempel~\cite{He} established that the curve
graph has infinite diameter.  Moreover, Masur-Minsky~\cite{MM} proved
that the curve graph is Gromov hyperbolic.

\smallskip

 Algebraically, an essential simple closed curve $\alpha$ on $S$
 determines a splitting of $G=\pi_1(S)$ as an amalgamated free product
 or an HNN-extension over the cyclic subgroup generated by $\alpha$
 (the amalgamated free product case occurs if $\alpha$ is separating
 and the HNN-extension case occurs if $\alpha$ is non-separating).
 Moreover, it is known~\cite{ZVC} that all splittings of $G$ over $\mathbb Z$ arise in this fashion.

 \smallskip

 In the free group context, it is often more natural to consider
 splittings over the trivial group rather than over $\mathbb Z$.  Thus
 we define the \emph{free splitting graph} $\mathcal
 F=\mathcal{F}(F_N)$ as follows.  The vertices of $\mathcal F$
 correspond to proper free product decompositions $F_N=A\ast B$, where
 $A\ne \{1\}, B\ne \{1\}$, where two such splittings are considered to
 be equal if their Bass-Serre trees are $F_N$-equivariantly isometric.
 Adjcency in $\mathcal{F}(F_N)$ corresponds to admitting a splitting
 of $F_N$ that is a common refinement of the two splittings in
 question. Informally, two distinct splittings $F_N=A\ast B$ and
 $F_N=A'\ast B'$ are adjacent in $\mathcal F(F_N)$ if there exists a
 free product decomposition $F_N=C_1\ast C_2\ast C_3$ such that either
 $A=C_1\ast C_2$, $B=C_3$ and $A'=C_1$, $B'=C_2\ast C_3$, or else
 $A'=C_1\ast C_2$, $B'=C_3$ and $A=C_1$, $B=C_2\ast C_3$. It is not
 hard to see that $\mathcal F_N$ is connected for $N\ge 3$. We also
 define the \emph{dual free splitting graph} $\mathcal
 F^\ast=\mathcal{F}^\ast(F_N)$ as follows. The vertex set of
 $\mathcal{F}^\ast(F_N)$ is the same as the vertex set of
 $\mathcal{F}(F_N)$. Two vertices $T_1$ and $T_2$ of
 $\mathcal{F}^\ast(F_N)$ are adjacent in $\mathcal{F}^\ast(F_N)$ if
 there exists a nontrivial element $a\in F_N$ which is elliptic with
 respect to both $T$ and $T_1$, that is $||a||_{T_1}=||a||_{T_2}$
 (this adjacency condition turns out be be equivalent to requiring
 that there exist a primitive, i.e. a member of a free basis, element
 of $F_N$ that is elliptic for both $T_1$ and $T_2$). In the standard
 curve complex context, analogues of definitions of adjacency in
 $\mathcal F$ and $\mathcal F^\ast$ are essentially equivalent to the
 standard definition of adjacency in $\mathcal C(S)$. Namely, two
 non-isotopic simple closed curves define adjacent vertices of
 $\mathcal C(S)$ if and only if the corresponding cyclic splittings of
 $\pi_1(S)$ have a common refinement. Also, two such curves define
 vertices at distance $\le 2$ in $\mathcal C(S)$ if and only if the
 corresponding splittings of $\pi_1(S)$ have a common nontrivial
 elliptic element. However, in the context of free groups $\mathcal F$
 and $\mathcal F^\ast$ appear to be rather different objects, with
 distances in $\mathcal F^\ast$ often being much smaller than in
 $\mathcal F$.

 We also introduce a key new object  $\mathcal I(F_N)$ called the
 \emph{intersection graph} of $F_N$.  The graph $\mathcal I(F_N)$ is a
 bipartite graph with vertices of two kinds: projective classes $[T]$
 of very small $F_N$-trees $T\in \overline{cv}(F_N)$ and projective
 classes $[\mu]$ of nonzero currents $\mu\in Curr(F_N)$. Two vertices
 $[T]$ and $[\mu]$ are adjacent in $\mathcal I(F_N)$ whenever $\langle
 T,\mu\rangle=0$. For $N\ge 3$ the graph $\mathcal I(F_N)$ has a large
 $Out(F_N)$-invariant connected component $\mathcal I_0(F_N)$ that contains all projective
 classes of Bass-Serre trees $T$ corresponding to nontrivial free
 product decompositions of $F_N$ (this component also contains all the
 projective currents $[\eta_a]$ corresponding to primitive elements
 $a$ of $F_N$). Both $\mathcal F(F_N)$ and $\mathcal F^\ast(F_N)$ admit $Out(F_N)$-equivariant
 Lipshitz maps into $\mathcal I_0(F_N)$ as do essentially all other reasonable
 analogues of the notion of a curve complex for free groups.

\smallskip

The results proved in sections 7 and 8
of this paper can be summarized as follows:

\begin{theor}\label{thm:B}
Let $N\ge 3$. Then the graphs $\mathcal I_0(F_N)$,
$\mathcal F(F_N)$ and $\mathcal F^*(F_N)$
have infinite diameter.

Moreover, if $Y_N$ is one of the above graphs and $\phi\in Out(F_N)$ is 
an atoroidal iwip, i.e.  $\phi$ is irreducible with irreducible powers (see Definition~\ref{defn:iwip})
and has no periodic conjugacy classes in $F_N$, then for any vertices $x,y$ of $Y_N$ we have
\[
\lim_{n\to\infty} d_{Y_N}(x,\phi^n y)=\infty.
\]
\end{theor}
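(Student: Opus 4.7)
The plan is to establish the divergence statement first for $Y_N=\mathcal{I}_0(F_N)$, deduce infinite diameter of $\mathcal{I}_0(F_N)$ as an immediate corollary, and then transfer both conclusions to $\mathcal{F}(F_N)$ and $\mathcal{F}^\ast(F_N)$ via the $Out(F_N)$-equivariant Lipschitz maps into $\mathcal{I}_0(F_N)$ mentioned in the introduction. These maps send a free splitting to its projective Bass-Serre tree and are $2$-Lipschitz: if two splittings admit a common refinement $F_N=C_1\ast C_2\ast C_3$ (with some $C_i\neq\{1\}$), then any primitive element $a\in C_i$ is elliptic in both trees, so $[\eta_a]$ is a common $\mathcal{I}_0$-neighbor; for $\mathcal{F}^\ast$ this is even more direct, since a common elliptic is built into the adjacency definition. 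The Lipschitz inequality together with equivariance reduces each statement for $Y_N\in\{\mathcal{F},\mathcal{F}^\ast\}$ to the corresponding one in $\mathcal{I}_0(F_N)$.

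To handle $\mathcal{I}_0(F_N)$, I pick an atoroidal iwip $\phi$ (such automorphisms exist for all $N\geq 2$) and use two dynamical inputs: $\phi$ acts on $\overline{CV}(F_N)$ with North-South dynamics (Levitt--Lustig), with attracting/repelling fixed points $[T_+],[T_-]\in\partial CV(F_N)$, and on $PCurr(F_N)$ with analogous North-South dynamics and fixed points $[\mu_+],[\mu_-]$. The two fixed-point sets are linked by duality relations $\langle T_+,\mu_-\rangle=\langle T_-,\mu_+\rangle=0$, and moreover $\mu_\mp$ is the unique projective current with this zero-intersection property against $T_\pm$, with symmetric statements when the roles of trees and currents are swapped. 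This uniqueness of the dual current is the key technical input; it is a form of unique ergodicity of the attracting algebraic lamination of an atoroidal iwip, and the continuity supplied by Theorem~A is what makes the ``zero-intersection'' condition stable under limits.

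With these ingredients in hand, suppose $d_{\mathcal{I}_0}(x,\phi^n y)\leq K$ along a subsequence $n_k\to\infty$. Applying the isometry $\phi^{-n_k}$, I obtain length-$K$ paths $\phi^{-n_k}x=w_0^k,w_1^k,\dots,w_K^k=y$ in $\mathcal{I}_0(F_N)$. Assuming $x$ is not $\phi$-fixed, the dynamics force $w_0^k$ to converge to $[T_-]$ or $[\mu_-]$, and by compactness of $\overline{CV}(F_N)$ and $PCurr(F_N)$ I pass to a subsequence so that every $w_i^k$ converges to a limit $w_i^\infty$. The continuity of $\langle\,,\,\rangle$ from Theorem~A gives $\langle w_i^\infty,w_{i+1}^\infty\rangle=0$ along the entire limit path. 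Since $\mathcal{I}_0$ is bipartite (trees alternate with currents along a path), the uniqueness of dual currents pins $w_1^\infty=[\mu_+]$, $w_2^\infty=[T_-]$, and so on inductively, so $y=w_K^\infty\in\{[T_-],[\mu_+]\}$, contradicting the assumption that $y$ is not $\phi$-fixed. Infinite diameter of $\mathcal{I}_0(F_N)$ is then immediate, as bounded diameter would give a uniform bound on $d_{\mathcal{I}_0}(x,\phi^n y)$ for all $n$. Degenerate cases where $x$ or $y$ happens to be $\phi$-fixed are dispatched by choosing a different atoroidal iwip whose fixed-point set avoids the two vertices in question; such iwips are plentiful. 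The main obstacle in this plan is the uniqueness of the dual current, on which the whole compactness argument pivots; everything else is a soft argument powered by Theorem~A and the available North-South dynamics.
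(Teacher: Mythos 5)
Your reduction to $\mathcal I_0(F_N)$ via the $Out(F_N)$-equivariant Lipschitz maps $j$ and $q$ is exactly what the paper does, but the core of your argument for $\mathcal I_0(F_N)$ rests on a statement that is false, and on a second statement that is true but much deeper than anything this paper establishes. You claim the ``duality relations'' $\langle T_+,\mu_-\rangle=\langle T_-,\mu_+\rangle=0$; Proposition~\ref{prop:non-zero} proves precisely the opposite, namely $\langle T_-,\mu_+\rangle\ne 0$ and $\langle T_+,\mu_-\rangle\ne 0$ (via train tracks and the Perron--Frobenius stretching factor). The pairings that do vanish are the same-sign ones, $\langle T_+,\mu_+\rangle=0=\langle T_-,\mu_-\rangle$, which is why $([T_+],[\mu_+])$ is described as an isolated edge of $\mathcal I(F_N)$. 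Moreover, the ``uniqueness of the dual current'' that your compactness argument pivots on --- that $[\mu_\pm]$ is the only projective current pairing to zero with $T_\pm$ (and dually for trees) --- is not a consequence of Theorem~\ref{thm:int-form}; it is a unique-ergodicity statement proved elsewhere (\cite{KL2}, \cite{CHL3}) and is deliberately \emph{not} used here. With the sign corrected and that external input imported, your limit-of-paths argument could be pushed through, but as written it is not a proof from the resources of this paper.

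The paper's route avoids unique ergodicity entirely: Proposition~\ref{prop:key} shows, using only North--South dynamics (backwards iteration by $\varphi^{-n}$), continuity (Lemma~\ref{lem:0}), and the \emph{non}vanishing $\langle T_+,\mu_-\rangle\ne 0$, that for zero-intersection sequences $\lim[T_n]=[T_+]$ iff $\lim[\mu_n]=[\mu_+]$; then Proposition~\ref{prop:inf} runs a minimal-counterexample induction, shortening a putative bounded-length path two steps at a time, to show $d_{\mathcal I}([T_n],[T])\to\infty$ whenever $[T_n]\to[T_+]$ and $[T]\ne[T_+]$. One further slip on your side: in the ``moreover'' statement the iwip $\phi$ is given, so you cannot ``choose a different atoroidal iwip'' to dodge the degenerate case where $x$ or $y$ is $\phi$-fixed; this is harmless only because $[T_\pm]$ and $[\mu_\pm]$ are not vertices of $\mathcal I_0(F_N)$ (they lie in other components, e.g.\ the isolated edge), and vertices of $\mathcal F(F_N)$ and $\mathcal F^\ast(F_N)$ are simplicial splittings, never $[T_\pm]$ --- but that needs to be said rather than patched by changing $\phi$.
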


Recently Behrstock, Bestvina and Clay \cite{BBC} obtained
by different arguments an independent proof the conclusion of Theorem~\ref{thm:B} for the complex  $\mathcal S(F_N)$ which is quasi-isometric to $\mathcal F(F_N)$ (see Definition \ref{othercomplexes} and the subsequent discussion). 

Our proof of Theorem~\ref{thm:B} also shows that the ``directions to infinity", given by Theorem~\ref{thm:B}, corresponding to substantially different $\phi$, are distinct. Thus one can also show that if $\psi$ and $\phi$ are two elements of $Out(F_N)$ such that they are irreducible with irreducible powers and without periodic conjugacy classes, and such that the subgroup $\langle \phi,\psi\rangle$ is not virtually cyclic then for any $x\in VY_N$ and any sequences $n_i\to\infty$, $m_i\to\infty$ we have
\[
\lim_{i\to\infty}d_{Y_N}(\phi^{n_i}x,\psi^{m_i}x)=\infty.
\]

We also consider several natural variations of $\mathcal F(F_N)$ and $\mathcal
F^*(F_N)$ and note that conclusions of Theorem~\ref{thm:B} apply to
them as well. Note that the intersection graph $\mathcal I(F_N)$ is
not connected and it has other interesting connected components apart
from $I_0(F_N)$. For example, for any $T\in cv(F_N)$ and for any
current $\mu$ with full support, $[T]$ and $[\mu]$ are
isolated vertices of $\mathcal I(F_N)$. Similarly, if $\phi\in Out(F_N)$ is an
atoroidal iwip then for the ``stable eigentree'' $T(\phi)$ and
``stable eigencurrent'' $\mu(\phi)$ the pair $[T(\phi)], [\mu(\phi)]$
forms an isolated edge in $\mathcal I(F_N)$ (see \cite{KL2} for details). 

\smallskip

It seems plausible that the graphs $\mathcal F(F_N)$ and $\mathcal F^*(F_N)$ are not quasi-isometric. Investigating hyperbolicity properties of these graphs remains an interesting open problem for future study. It appears that $\mathcal F(F_N)$ is unlikely to be Gromov-hyperbolic  while $\mathcal F^*(F_N)$ does seem to have a chance for hyperbolicity. In particular, a reducible element of $Out(F_N)$ always acts on $\mathcal F^*(F_N)$ with a bounded orbit while it seems possible for a reducible automorphism to act on $\mathcal F(F_N)$ with an unbounded orbit. Also, it seems that the orbits in $\mathcal F(F_N)$ of free abelian subgroups of $Out(F_N)$  may produce quasi-flats, while similar orbits in $\mathcal F^*(F_N)$ have finite diameter.

\medskip

The main result of this paper, Theorem~\ref{thm:A}, seems to have
the potential to become a important tool in the study of Outer space
and $Out(F_{N})$:

\begin{itemize}
\item
We prove in sections 7 and 8 of this paper
that several free group analogues of the curve
complex have infinite diameter.
\item
Ursula Hamenst\"adt had recently used~\cite{Ha} Theorem~\ref{thm:A}
as a crucial ingredient to prove that any non-elementary subgroup of
$Out(F_N)$, where $N\ge 3$, has infinite dimensional second bounded
cohomology group (infinite dimensional space of quasi-morphisms). This
in turn has an application to proving that any homomorphism from any
lattice in a higher-rank semi-simple Lie group to $Out(F_N)$ has finite image.
\smallskip
\item
Very recently Bestvina and Feighn~\cite{BF08} used Theorem~\ref{thm:A} as a key tool in the proof that for any finite collection $\phi_1,\dots,\phi_m\in Out(F_N)$ of iwip (irreducible with irreducible powers, see Definition~\ref{defn:iwip}) outer automorphisms of $F_N$ there exists a $\delta$-hyperbolic complex $X$ with an isometric $Out(F_N)$-action where each $\phi_i$ acts with a positive translation length. 
Unlike the curve complex analogues discussed here in section 7 and 8,
the Bestvina-Feighn construction is not intrinsically defined, but
their result gives substantial hope and significant indication that
some of these other more functorial and intrinsic analogues of the
curve complex for free groups may be Gromov-hyperbolic as well.

\item The results of the new paper~\cite{BF08} of Bestvina and Feighn also imply that if $\phi\in Out(F_N)$ is an iwip, then $\phi$ acts
with positive asymptotic translation length on $\mathcal F(F_N)$ and
on $\mathcal F^*(F_N)$. This means that 
when $Y_N$ is one of these two graphs, and $T\in
VY_N$ is an arbitrary vertex, then the orbit map $\mathbb Z\to Y_N,
n\mapsto \phi^n T$, is a quasi-isometric embedding.

\item
In a new preprint \cite{KL3}, we use Theorem~\ref{thm:A} to construct
domains of discontinuity for the action of subgroups of $Out(F_N)$ on
$\overline{CV}(F_N)$ and on $\mathbb P Curr(F_N)$.
\item
In another new preprint \cite{KL4} we show that every subgroup of
$Out(F_N)$ which contains an atoroidal iwip and is not virtually
cyclic, also contains a non-abelian free subgroup where every non-trivial element is an atoroidal iwip. 
\smallskip
\item
Finally, in \cite{KL2} we use Theorem~\ref{thm:A} to
characterize the situation where $\langle T,\mu\rangle=0$.
Specifically, we prove in \cite{KL2} that for $T\in
\overline{cv}(F_N)$ and $\mu\in Curr(F_N)$ we have $\langle
T,\mu\rangle=0$ if and only if $supp(\mu)\subseteq L^2(T)$.  Here
$supp(\mu)$ is the support of $\mu$ and $L^2(T)$ is the \emph{dual
algebraic lamination} of $T$ (see \cite{CHL1}).  That result in turn
is applied in \cite{KL2} to the notions of a \emph{filling conjugacy
class} and a \emph{filling current} as well as to obtain results about
\emph{bounded translation equivalence} in $F_N$.  In \cite{KL2} we
also obtain a generalization of the length compactness result of
Francaviglia~\cite{Fra}: we show that if $T\in cv(F_N)$ and
$\mu\in Curr(F_N)$ is a current with full support (e.g. the
Patterson-Sullivan current~\cite{KN}) then for every $C>0$ the set
$\{\phi\in Out(F_N): \langle T, \phi\mu\rangle \le C\}$
is finite and hence the set $\{ \langle T, \phi\mu\rangle : \phi\in
Out(F_N)\}\subseteq \mathbb R$ is discrete.
\end{itemize}

\medskip

\noindent
{\em Acknowledgments:}
The authors are grateful to Gilbert Levitt, Thierry Coulbois, Mark
Sapir and Saul Schleimer for useful and stimulating conversations that
were very helpful in writing this paper. Our special thanks go to Saul
Schleimer whose persistent questions inspired sections 7 and 8
of this paper.
The backwards iteration idea for the proof of
Proposition~\ref{prop:key} was suggested to us by Gilbert Levitt.

This paper grew out of a series of discussions
that the authors had in October-November 2007 at MSRI Berkeley.
We would like to thank MSRI and the organizers of the MSRI special semester in
Geometric Group Theory for the financial support that made these
discussions possible.

\section{Outer space and its closure}\label{sect:cv}

We will only briefly recall the main definitions related to
Outer space here.  For a more detailed background information we refer the
reader to~\cite{BF00,CL,CV,Gui1, Lu2} and other sources.

\smallskip

Let $F_N$ be a free group of finite rank $N\ge 2$.
Let $T$ be an $\mathbb R$-tree with an isometric action of
$F_{N}$. For any $g\in F_{N}$, denote
\[
||g||_T=\inf_{x\in T} d_T(x,gx)=\min_{x\in T} d_T(x,gx).
\]
The number $||g||_T$ is called the \emph{translation length} of $g$.

\begin{rem}\label{powers}
Note that for all $m \in \Z$ we have:
$$||g^m||_{T} = |m| \cdot ||g||_{T}.$$
\end{rem}

An isometric action of $F_N$ on an $\mathbb R$-tree $T$ is called
{\em minimal} if $T$ has no proper $F_{N}$-invariant subtrees.

\begin{defn}
An isometric action of $F_N$ on an $\mathbb R$-tree $T$ action is called
\emph{very small} if:
\begin{enumerate}
\item The stabilizer in $F_N$ of every non-degenerate arc in $T$ is cyclic
(either trivial or infinite cyclic).
\item The stabilizer in $F_N$ of
every non-degenerate tripod is trivial.
\item For every $g\in F_N,g\ne 1$ and every integer $n\ne 0$ if $g^n$ fixes some non-degenerate arc, then $g$ fixes that arc.
\end{enumerate}
\end{defn}

Thus free isometric actions of $F_N$ on $\mathbb R$-trees, and, more
generally, actions with trivial arc stabilizers, are very small.

\begin{defn}[Outer space and its closure]
Let $N\ge 2$ be an integer.
\smallskip
\noindent

\begin{enumerate}
\item We denote by $cv(F_N)$ the space of all minimal free and discrete
isometric actions of $F_N$ on $\mathbb R$-trees.  Two such actions of
$F_N$ on trees $T$ and $T'$ are identified in $cv(F_N)$ if there
exists an $F_N$-equivariant isometry between $T$ and $T'$.  The space
$cv(F_N)$ is called \emph{unprojectivized Outer space}
for $F_N$.

\item  Denote by $\overline{cv}(F_N)$ the space of all minimal nontrivial
very small isometric actions of $F_N$ on $\mathbb R$-trees.  Again, two such
actions are considered equal in $\overline{cv}(F_N)$ if there exists
an $F_N$-equivariant isometry between the two trees in question.
\end{enumerate}

\end{defn}

Note that if $T\in cv(F_N)$ then the quotient $T/F_N$ is compact.
It is known that every element $T\in \overline{cv}(F_N)$
is uniquely identified by its
\emph{translation length function}
$F_N\to \mathbb R, \,\,
g \mapsto ||g||_T$.
That is,
for $T,T'\in \overline{cv}(F_N)$ we have $T=T'$ if and only if
$||g||_T = ||g||_{T'}$ for all $g \in F_{n}$.

\smallskip

The spaces $cv(F_N)$ and $\overline{cv}(F_N)$ have several natural
topologies that are all known to coincide~\cite{Pau}: the pointwise
translation length function convergence topology, the equivariant
Gromov-Hausdorff-Paulin
convergence topology and the weak $CW$-topology (for the case of
$cv(F_N)$).  In particular if $T_n,T\in \overline{cv}(F_N)$ then
$\underset{n \to \infty}{\lim} T_n=T$ if and only if for every $g\in F_N$ we have
$\underset{n \to \infty}{\lim}
||g||_{T_n} =||g||_T$.
Note that $cv(F_N)\subseteq \overline{cv}(F_N)$.
It is known that $\overline{cv}(F_N)$ is precisely the closure of
$cv(F_N)$ (with respect to either of the above topologies).

\smallskip

There is a natural
\emph{continuous action} of $Out(F_N)$ on $\overline{cv}(F_N)$
that preserves $cv(F_N)$, and which can be written from the left as
well as from the right, using the convention $\phi T=T\phi^{-1}$ for $T\in \overline{cv}(F_N)$ and $\phi\in Out(F_N)$.
At the translation-length-function level this action
can be defined as follows.  For $T\in \overline{cv}(F_N)$ and
$\widehat\varphi\in Aut(F_N)$ with image $\varphi \in Out(F_{N})$ we have
$$||g||_{T \varphi}=||g||_{\varphi^{-1} T}=||\widehat\varphi(g)||_T$$
for any $g\in F_N$.

\begin{defn}[Projectivized Outer space and its compactification]$ $

\noindent (1)
For $N\ge 2$ one defines $CV(F_{N}) = cv(F_N)/\sim\,$, where $T_1 \sim T_2$
whenever there exists an $F_N$-equivariant homothety between $T_1$ and
$T_2$.  Thus $T_1\sim T_2$ in $cv(F_N)$ if and only if there is a
constant $c>0$
such that $||g||_{T_1}=c \cdot ||g||_{T_2}$ for all $g \in F_{N}$.
The space $CV(F_N)$, first introduced by M.~Culler and K.~Vogtmann \cite{CV},
is called the \emph{projectivized Outer space} or simply
\emph{Outer space}.

\smallskip
\noindent
(2)
Similarly, define $\overline {CV}(F_N)
= \overline{cv}(F_N)/\sim\,$ where
$\sim$ is again the above
homothety relation. For $T\in \overline{cv}(F_N)$
denote by $[T]$ the $\sim$-equivalence class of $T$.

\smallskip
\noindent
(c) The spaces $CV(F_N)$
and $\overline
{CV}(F_N)$ inherit the quotient topology from
$cv(F_N)$ and $\overline{cv}(F_N)$.
This makes the inclusion
$CV(F_N)\subseteq \overline {CV}(F_N)$ into a topological embedding
with dense image.  Moreover, the space $\overline {CV}(F_N)$ is
compact and thus provides a natural compactification of $CV(F_N)$.  We
also denote $\partial CV(F_N)=\overline {CV}(F_N) \smallsetminus
CV(F_N)$ and call $\partial CV(F_N)$ the \emph{Thurston boundary} of $CV(F_N)$.
\end{defn}

The natural action of $Out(F_N)$ on $\overline{cv}(F_N)$ factors
through to the action of $Out(F_N)$ by homeomorphisms on $\overline
{CV}(F_N)$.  Namely, for
$\varphi\in Out(F_N)$ and $T\in
\overline{cv}(F_N)$ we have $\varphi [T]=[\varphi T]$.
This action of $Out(F_N)$ on $\overline {CV}(F_N)$ leaves $CV(F_N)$ invariant, so
that $Out(F_N)$ acts on $CV(F_N)$ as well.

\begin{rem}\label{rem:j}
There is a standard $Out(F_N)$-equivariant topological embedding $j:CV(F_N)\to
cv(F_N)$ that gives the identity on $CV(F_{N})$ when composed
with the projection map $cv(F_N)\to
cv(F_N)/\sim\,\,=CV(F_N)$.
Namely, $j([T])=T'$, where $T'\sim T$ and the
quotient graph $T'/F_N$ has volume $1$.  One can
alternatively think about elements of $cv(F_N)$ as \emph{marked metric graph structures} on
$F_N$, as explained in more detail in Remark~\ref{rem:mgs} below.
\end{rem}

\section{Bounded back-tracking}

As before let $F_N$ be a free group of finite rank $N\ge 2$, and
let $A$ be a free basis of $F_N$.
We denote by $T_{A}$ the Cayley graph (which, of course, is a tree !) of
$F_N$ with respect to $A$.

Let $T$ be an
$\mathbb R$-tree with an isometric action of $F_N$,
and consider a point $p\in T$.
There is a unique $F_N$-equivariant map
$i_p: T_{A}\to T$ which is linear on each edge of $T_{A}$, and
which satisfies $i_p(1)=p$.

\begin{defn}[Bounded Back-Tracking constant]
The \emph{bounded back-tracking constant}
with respect to
$A$, $T$ and $p$, denoted
$BBT_{p, A}(T)$, is the infimum of all
constants $C>0$
such that for any $Q,R\in T_{A}$, the image $i_p([Q,R])$ of
$[Q,R]\subseteq T_{A}$ is contained in the $C$-neighborhood of
$[i_p(Q),i_p(R)]$.
\end{defn}

An useful result of \cite{GJLL} states:

\begin{prop}\label{prop:BBT}
Let $F_N$ be a finitely generated non-abelian free group with a minimal very
small isometric action on an $\mathbb R$-tree $T$.  Let $A$ be
a free basis of $F_N$ and let $p\in T$.

Then we have:
$$BBT_{p, A}(T)\le
\sum_{a\in A} d_T(p,ap).$$
In particular, $BBT_{p, A}(T)<\infty$.
\end{prop}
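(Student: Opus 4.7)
The plan is to reduce the bound $BBT_{p,A}(T) \le C$ (where $C := \sum_{a \in A} d_T(p, ap)$) to controlling the heights $h_j := d_T(w_j p, [p, wp])$ of the iterated translates above the geodesic, and then exploit a compact fundamental subtree $K \subseteq T$ of length at most $C$ whose $F_N$-translates cover $T$. By $F_N$-equivariance of $i_p$ we may assume $Q = 1 \in T_A$. Since $i_p$ is linear on each edge of $T_A$ and edge-interior adjustments cost at most $\max_a d_T(p, ap) \le C$, we may further assume $R = w$ is a vertex corresponding to a reduced word $w = a_1 \cdots a_n$ in $A \cup A^{-1}$. Writing $w_j := a_1 \cdots a_j$ and $p_j := w_j \cdot p$, the image $i_p([1, w])$ is the concatenation of segments $[p_{j-1}, p_j]$ of lengths $\ell_{a_j} := d_T(p, a_j p)$. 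On each such segment, the tree-distance function $x \mapsto d_T(x, [p, wp])$ is piecewise linear and attains its maximum at an endpoint $p_{j-1}$ or $p_j$, so it suffices to prove $h_j \le C$ for every $j$ and every reduced $w$.

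Next, introduce the fundamental subtree $K := \bigcup_{a \in A}[p, ap]$. Since all these arcs share the point $p$, $K$ is a compact connected subtree of total length, and hence diameter, at most $C$. The union $\bigcup_{g \in F_N} gK$ is $F_N$-invariant, connected (consecutive translates $gK$ and $gaK$ share the point $gap$), and contains $p$, so by minimality of the action it equals all of $T$. Each translate $w_j K$ is therefore a convex subtree of $T$ containing $p_j$, of diameter at most $C$.

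The key geometric claim is that $w_j K \cap [p, wp] \ne \emptyset$ for every $j$. Granting this, for any $q$ in the intersection, convexity of $w_j K$ forces $[p_j, q] \subseteq w_j K$, so
\[
h_j \;=\; d_T(p_j, [p, wp]) \;\le\; d_T(p_j, q) \;\le\; \mathrm{diam}(w_j K) \;=\; \mathrm{diam}(K) \;\le\; C,
\]
which is the desired bound.

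The main obstacle is verifying the intersection $w_j K \cap [p, wp] \ne \emptyset$. Writing $h_j = \tfrac12\bigl(e(w_j) + e(w_j^{-1} w) - e(w)\bigr)$ via the tripod identity (with $e(g) := d_T(p, gp)$), this is equivalent to the statement that the bridge from $p_j$ to the geodesic does not escape the translate $w_j K$. The geodesic $[p, wp]$ lies inside the image subtree $Y_w := \bigcup_i [p_{i-1}, p_i]$, itself a union of $K$-translates indexed by the reduced-word positions (each segment lying in $w_{i-1} K$ or $w_i K$ according to whether $a_i \in A$ or $a_i \in A^{-1}$). I expect to complete the argument by a local analysis of the branching structure of $Y_w$ at $p_j$: the reduced form of $w$, so that $w_j$ and $w_j^{-1} w = a_{j+1}\cdots a_n$ concatenate without cancellation, together with the very-small hypothesis (trivial tripod stabilizers and cyclic arc stabilizers) must jointly rule out configurations in which the geodesic could bypass $w_j K$. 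Without very-smallness, pathological branchings of $T$ could allow arbitrarily long bridge excursions, so this is precisely where the full force of the hypothesis enters the argument.
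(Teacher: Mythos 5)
Note first that the paper contains no proof of Proposition~\ref{prop:BBT}: it is quoted as a result of \cite{GJLL}, so your attempt can only be measured against the argument in that reference. As it stands, your proposal has a genuine gap, and it sits exactly where all the difficulty of the statement lies. The preparatory steps are fine: reducing to $Q=1$ and (essentially) to vertex endpoints, the convexity of $x\mapsto d_T(x,[p,wp])$ along each segment $[p_{j-1},p_j]$, the identity $h_j=\tfrac12\bigl(e(w_j)+e(w_j^{-1}w)-e(w)\bigr)$, and the facts that $K=\bigcup_{a\in A}[p,ap]$ has diameter at most $C$ and that $\bigcup_{g}gK=T$ by minimality. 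But the ``key geometric claim'' that $w_jK\cap[p,wp]\ne\emptyset$ for every prefix $w_j$ \emph{is} the bounded back-tracking statement (indeed an a priori stronger form of it): it asserts precisely that the image path cannot make long excursions that the geodesic bypasses. You do not prove it; you only say you ``expect'' to deduce it from a local analysis of the branching of $Y_w$ at $p_j$ using trivial tripod stabilizers and cyclic arc stabilizers. That expectation is not substantiated, and there is good reason to doubt a purely local argument can work: as your own example of a non-very-small action (a dense group of translations of $\mathbb R$) shows, unbounded back-tracking is a global phenomenon, and the hypothesis ``very small'' enters the known proofs globally, not through branching at a single point. In \cite{GJLL} the bound is obtained using heavy structural input on very small trees --- in effect the identification of $\overline{cv}(F_N)$ with limits of free simplicial actions (\cite{CL,BF93}) together with a direct argument in the simplicial case and the observation that, for each fixed $w$ and prefix $w_j$, the quantity $d_T(w_jp,[p,wp])$ is a continuous function of finitely many translation distances and hence passes to equivariant Gromov--Hausdorff limits. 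Nothing in your outline substitutes for that input, so the proof is not complete.

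A smaller quantitative point: your reduction to vertex endpoints, as phrased, loses the stated constant. If you prove the bound $C$ only when $Q,R$ are vertices and then ``adjust'' for interior points of edges, you must also account for the fact that replacing $R$ by a nearby vertex moves the target geodesic $[i_p(Q),i_p(R)]$ by up to $\max_{a\in A}d_T(p,ap)$ in Hausdorff distance, so the naive adjustment yields a bound of the shape $C+\max_{a}d_T(p,ap)$ rather than $\sum_{a\in A}d_T(p,ap)$. This is repairable with more care, but it should be addressed if you intend to recover the sharp constant in the proposition.
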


The following is an easy corollary of the definitions (see
Lemma~3.1(b) of \cite{GJLL} or Lemma~3.1 of \cite{CHL2}):

\begin{lem}\label{lem:LL}
Let $F_N$ be a finitely generated non-abelian free group with a minimal very
small isometric action on an $\mathbb R$-tree $T$.  Let $A$ be
a free basis of $F_N$ and let $p\in T$.

Suppose $BBT_{p, A}(T)<C$. Then the
following hold:
\begin{enumerate}
\item Let $w\in F(A)$ be cyclically reduced.  Then
  \[
\big| ||w||_T-d_T(p,wp) \big|\le 2C.
\]
\item Let $u=u_1\dots u_m$ be a freely reduced product of freely
reduced words in $F=F(A)$, where $m\ge 1$.  Then we have \[\left|
d_T(p,up)-\sum_{i=1}^m d_T(p,u_ip) \right|\le 2mC.\]

\item Suppose $u,u_1,\dots, u_m$ are as in (2) and that, in addition, $u$ is cyclically reduced in $F(A)$.
Then
\[
\left|
||u||_T-\sum_{i=1}^m d_T(p,u_ip) \right|\le 2mC+2C\le 4mC.
\]

\item Suppose $u,u_1,\dots, u_m$ are as in (2) and that, in addition, $u,u_1,\dots, u_m$ are cyclically reduced in $F(A)$.
Then
\[
\left| ||u||_T-\sum_{i=1}^m ||u_i||_T \right|\le 6mC.
\]
\end{enumerate}
\end{lem}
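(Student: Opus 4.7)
The plan is to prove (2) first by induction on $m$, derive (1) as a corollary via iteration on the cyclically reduced word $w$, and then obtain (3) and (4) by repeated triangle inequalities. Write $C$ for the BBT constant and make free use of the $\mathbb R$-tree identity
\[
d_T(a,c) + d_T(c,b) - d_T(a,b) \;=\; 2\, d_T\bigl(c,[a,b]\bigr), \qquad a,b,c \in T,
\]
which says that the Gromov product at $c$ equals the distance from $c$ to the geodesic $[a,b]$.

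For (2), I would induct on $m$, the case $m=1$ being trivial. For the inductive step, write $u = u_1 u'$ with $u' = u_2 \cdots u_m$; since the given decomposition of $u$ is freely reduced, the Cayley-graph geodesic $[1,u] \subset T_A$ passes through the vertex $u_1$. By Proposition~\ref{prop:BBT}, the image $i_p([1,u])$ lies in the $C$-neighborhood of $[p, up]$, so in particular $u_1 p = i_p(u_1)$ is within $C$ of $[p, up]$. The identity above, applied at $c = u_1 p$ with endpoints $p$ and $up$, yields $d_T(p, u_1 p) + d_T(u_1 p, up) - d_T(p, up) \le 2C$. Using $F_N$-equivariance as $d_T(u_1 p, up) = d_T(p, u' p)$, together with the ordinary triangle inequality $d_T(p, up) \le d_T(p, u_1 p) + d_T(p, u' p)$, one obtains $|d_T(p, up) - d_T(p, u_1 p) - d_T(p, u' p)| \le 2C$; combining with the inductive estimate $|d_T(p, u'p) - \sum_{i=2}^{m} d_T(p, u_i p)| \le 2(m-1)C$ completes the induction.

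For (1), I would apply (2) to the freely reduced word $w^k = w \cdots w$ with $k$ factors---still freely reduced since $w$ is cyclically reduced---to obtain $k\bigl(d_T(p,wp) - 2C\bigr) \le d_T(p, w^k p)$. On the other hand, the standard $\mathbb R$-tree formula $d_T(p, w^k p) = ||w^k||_T + 2\, d_T(p, \mathrm{Min}(w^k))$, combined with the obvious inclusion $\mathrm{Min}(w^k) \supseteq \mathrm{Min}(w)$ and with $||w^k||_T = k\,||w||_T$ (Remark~\ref{powers}), gives $d_T(p, w^k p) \le k\,||w||_T + 2\,d_T(p,\mathrm{Min}(w))$. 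Dividing by $k$ and letting $k \to \infty$ forces $d_T(p,wp) - ||w||_T \le 2C$; the reverse inequality $d_T(p, wp) \ge ||w||_T$ is immediate from the definition of translation length.

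Parts (3) and (4) are then purely formal. For (3), combine (2) with (1) applied to the cyclically reduced $u$, using $2C \le 2mC$. For (4), additionally apply (1) to each cyclically reduced $u_i$ and sum to obtain $|\sum_i ||u_i||_T - \sum_i d_T(p, u_i p)| \le 2mC$, then combine with (3). The main obstacle is the inductive step of (2); the key observation is that free reduction of the product $u_1 u'$ is precisely what forces the vertex $u_1$ to sit on the Cayley geodesic $[1,u] \subset T_A$, at which point BBT provides the required bound on the Gromov product at $u_1 p$. Everything beyond this is either tree geometry or triangle-inequality bookkeeping on top of Proposition~\ref{prop:BBT}.
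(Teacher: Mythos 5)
Your proof is correct. The paper itself does not prove this lemma but simply records it as ``an easy corollary of the definitions,'' citing Lemma~3.1(b) of \cite{GJLL} and Lemma~3.1 of \cite{CHL2}; your argument supplies exactly the expected details and follows the same standard route: since the product $u_1\cdots u_m$ is reduced, each partial product lies on the Cayley geodesic $[1,u]$, so the BBT property bounds the Gromov product at $u_ip$ and gives (2), while the $w^k$-iteration together with $d_T(p,w^kp)\le k\,||w||_T+2\,d_T(p,\mathrm{Min}(w))$ yields (1), and (3)--(4) follow by the triangle inequality.
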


\section{Uniform approximation of $\R$-trees}

Let $A$ be a free basis of $F_N$.  Recall that  $T_A$ is the Cayley
tree of $F_N$ with respect to $A$, where all edges of $T_A$ have
length $1$.  Thus $T_A\in cv(F_N)$.
For $g\in F_N$ we denote by $|g|_A$ the freely reduced length of $g$ with respect to $A$, and we denote by $||g||_A$ the
cyclically reduced length of $g$ with respect to $A$. Thus $||g||_A=||g||_{T_A}$.

\smallskip

The following statement is a key  ingredient in the proof of the
continuity of our geometric intersection number.  We believe that it
will also turn out to be useful  in other circumstances.

\begin{prop}[Uniform Scaling Approximation]\label{prop:lgl}
Let $T\in \overline{cv}(F_N)$, let $A$ be a free basis of $F_N$ and
let $\epsilon>0$.  Then there exists a neighborhood $U_\epsilon$ of
$T$ in $\overline{cv}(F_N)$, such that for every $w\in F_N$ and every
$T_1,T_2\in U_\epsilon$ we have:
\[
\big| ||w||_{T_1}- ||w||_{T_2}\big|\le \epsilon||w||_A.\tag{$\dag$}
\]
\end{prop}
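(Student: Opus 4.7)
The plan is to combine the Bounded Back-Tracking estimates from Lemma~\ref{lem:LL} with a finite-piece decomposition of $w$. Since both $||w||_{T'}$ and $||w||_A$ are invariants of the conjugacy class of $w$, I may assume $w$ is cyclically reduced of length $k = ||w||_A$.

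The main step is to obtain a uniform Bounded Back-Tracking bound on a neighborhood of $T$. Fix $p \in T$ and set $C^* := \sum_{a \in A} d_T(p, a p) + 1$, so $BBT_{p, A}(T) < C^*$ by Proposition~\ref{prop:BBT}. Let $L := \lceil 16 C^*/\epsilon \rceil$ and let $\mathcal{U}$ be the (finite) set of freely reduced words in $A^{\pm 1}$ of length in $[L, 2L]$. Invoking the equivalence of the length function topology with the equivariant Gromov-Hausdorff-Paulin topology on $\overline{cv}(F_N)$, I choose a neighborhood $U_\epsilon$ of $T$ small enough that for every $T' \in U_\epsilon$ there is a basepoint $p' \in T'$ satisfying $|d_{T'}(p', s p') - d_T(p, s p)| < \min(1/|A|,\, \epsilon L/8)$ for every $s \in A \cup \mathcal{U}$; the Paulin formulation is essential here, since the length function description does not by itself provide compatible basepoints across nearby trees. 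The first half of the estimate gives $BBT_{p', A}(T') \le \sum_{a \in A} d_{T'}(p', a p') < C^*$, uniformly on $U_\epsilon$.

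For each of the finitely many conjugacy classes of $w$ with $k < L$, a further shrinking of $U_\epsilon$ (using continuity of the individual length function $T' \mapsto ||w||_{T'}$) handles $(\dag)$ directly. For $k \ge L$, decompose $w = u_1 u_2 \cdots u_m$ as a freely reduced product of freely reduced pieces with $|u_j|_A \in [L, 2L]$ and $m \le k/L$. Parts (1) and (2) of Lemma~\ref{lem:LL} then yield
\[
\Bigl| \, ||w||_{T'} - \sum_{j=1}^{m} d_{T'}(p', u_j p') \, \Bigr| \le (2m + 2) C^* \le 4 m C^*
\]
for every $T' \in U_\epsilon$, and combining with the triangle inequality and the bound on the $d_{T'}(p', u p')$ across the finite set $\mathcal{U}$ gives
\[
\bigl| \, ||w||_{T_1} - ||w||_{T_2} \, \bigr| \le 8 m C^* + m \cdot \tfrac{\epsilon L}{4} \le \tfrac{\epsilon k}{2} + \tfrac{\epsilon k}{4} < \epsilon k,
\]
which is $(\dag)$. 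The main obstacle is the uniform BBT step; once that is in place, the chunking and finite-set arguments are elementary.
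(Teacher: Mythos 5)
Your proof is correct and follows essentially the same route as the paper: a uniform bounded back-tracking bound on a Gromov--Hausdorff--Paulin neighborhood (with compatible basepoints for finitely many test words), a decomposition of the cyclically reduced word $w$ into bounded-length chunks, and the estimates of Lemma~\ref{lem:LL}. The only difference is bookkeeping: where the paper forces chunks of exact length $M$ and treats arbitrary $w$ by applying $(\dag)$ to $w^M$ and dividing by $M$, you allow chunk lengths in $[L,2L]$ and dispose of the finitely many short conjugacy classes by a direct continuity argument, which is an equally valid variant.
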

\begin{proof}

Choose $p\in T$.  Let $C>0$ be such that $d_T(p,ap)< C/N$ for every
$a\in A$, so that by Proposition~\ref{prop:BBT} we have
$BBT_{p,A}(T)<C$.  It suffices to prove the proposition for all
sufficiently small $\epsilon$, and we will assume that $\epsilon>0$
satisfies $N\epsilon\le C$.

Choose an integer $M>1$ so that $16C/M<\epsilon/2$.  Let
$0<\epsilon_1<\epsilon$ be such that $\frac{2\epsilon_1}{M}\le
\epsilon/2$.

Using the equivariant Gromov-Hausdorff-Paulin topology on
$\overline{cv}(F_N)$ it follows that there exists a
neighborhood $U_\epsilon$ of $T$ in $\overline{cv}(F_N)$ such
that for every $T'\in U_\epsilon$
the following holds:
There is some $p'\in T'$
such that for every $g\in F_N$ with $|g|_A\le M$
we have
\[
\big| d_T(p,gp)-d_{T'}(p',gp') \big|\le \epsilon_1 \, .\tag{$\ast$}
\]
Hence $BBT_{p', A}(T')\le \sum_{a\in A} d_{T'}(p',ap')<
C+N\epsilon_1\le 2C$.  We
will now verify
that the neighborhood $U_\epsilon$
satisfies the requirements of the proposition.

Let $T_1,T_2\in U_\epsilon$ be arbitrary, and let $p_1\in T_1, p_2\in
T_2$ be chosen as above.  Let $w\in F(A)$ be a non-trivial cyclically
reduced word such that $||w||_A$ is divisible by $M$.  Put
$m=||w||_A/M$.  Thus $m\ge 1$ is an integer.  Write $w$ as a freely
reduced product $w=u_1\dots u_m$ in $F(A)$, where $|u_i|_A=M$ for all
$i=1,\dots, m$.

Then, by the properties of the BBT-constant listed in
Lemma~\ref{lem:LL} (specifically, by part (3) of Lemma~\ref{lem:LL}), we have for $j=1,2\,$:
\[
\big| ||w||_{T_j}-\sum_{i=1}^m d_{T_j}(p_j,u_ip_j) \big|\le 8Cm
\]
Also, ($\ast$) implies that for $j=1,2$ the inequality
\[
\big| \sum_{i=1}^m d_{T}(p,u_ip)-\sum_{i=1}^m d_{T_j}(p_j,u_ip_j)
\big|\le m\epsilon_1
\]
holds.
This implies:
\[
\big| ||w||_{T_1}- ||w||_{T_2}\big|\le
16Cm+2m\epsilon_1=\frac{16C+2\epsilon_1}{M}||w||_A\le \epsilon
||w||_A
\]

Thus we have established that $(\dag)$ holds for every $w\in F_N$ with
$||w||_A$ divisible by $M$.

For the general case let $w\in F(A)$ be an arbitrary nontrivial cyclically reduced word.  Since $||w^M||_A=M||w||_A$ is divisible by $M$, we already know that $(\dag)$ holds for $w^M$. By dividing by $M$ both sides of the inequality $(\dag)$ for $w^M$, we conclude that $(\dag)$ holds for $w$ in view of Remark~\ref{powers}.
\end{proof}

\section{Geodesic currents}\label{sect:currents}

Let $\partial F_N$ be the hyperbolic boundary of $F_N$ (see \cite{GH}
for background information about word-hyperbolic groups).  We denote
\[
\partial^2F_N=\{(\xi_1,\xi_2): \xi_1,\xi_2\in \partial F_N, \text{ and
} \xi_1\ne \xi_2\}.
\]
Also denote by $\sigma: \partial^2 F_N\to \partial^2 F_N$ the ``flip"
map defined as $\sigma:(\xi_1,\xi_2)\mapsto (\xi_2,\xi_1)$ for
$(\xi_1,\xi_2)\in \partial^2F_N$.

\begin{defn}[Simplicial charts]
A \emph{simplicial chart} on $F_N$ is an isomorphism $\alpha:
F_N\to\pi_1(\Gamma,x)$, where $\Gamma$ is a finite connected graph
without valence-one vertices, and where $x$ is a vertex of $\Gamma$.
\end{defn}

A simplicial chart $\alpha$  on $F_N$ defines an
$F_N$-equivariant quasi-isometry between $F_N$ (with any word metric)
and the universal covering
$\widetilde \Gamma$, equipped with the simplicial metric (i.e. every edge has length $1$).
Correspondingly, we get canonical $F_N$-equivariant homeomorphisms $\partial  \alpha:
\partial F_N\to \partial \widetilde \Gamma$ and $\partial ^{2} \alpha:
\partial^2 F_N\to \partial^2 \widetilde \Gamma$, that do not depend on
the choice of a word metric for $F_N$.  If $\alpha$ is fixed, we will
usually use these homeomorphisms to identify $\partial F_N$ with
$\partial \widetilde \Gamma$ and $\partial^2 F_N$ with $\partial^2
\widetilde \Gamma$ without additional comment.

\begin{rem}\label{rem:mgs}$ $

(a) Combinatorially, we adopt Serre's convention regarding graphs. Thus every edge $e\in E\Gamma$ comes equipped with the \emph{inverse edge} $e^{-1}$, such that $e\ne e^{-1}$ and $(e^{-1})^{-1}=e$. Moreover, for every $e\in E\Gamma$, the initial vertex of $e$ is the terminal vertex of $e^{-1}$ and the terminal vertex of $e$ is the initial vertex of $e^{-1}$. An \emph{orientation} on $\Gamma$ is a partition $E\Gamma=E^+\Gamma\sqcup E^{-}\Gamma$ such that for every $e\in E\Gamma$ one of the edges $e,e^{-1}$ belongs to $E^+\Gamma$ and the other belongs to $E^-\Gamma$.

(b) Any simplicial chart $\alpha: F_N\to\pi_1(\Gamma,x)$ defines a
finite-dimensional open cell in $cv(F_N)$ and a finite-dimensional
open simplex in $CV(F_N)$.
More precisely, let $L$ be a \emph{metric graph
structure} on $\Gamma$, that is, a map $L:E\Gamma\to R_{>0}$ such that
$L(e)=L(e^{-1})$ for every edge $e\in E\Gamma$.  Then we can pull-back $L$
to $\widetilde \Gamma$ by giving every edge in $\widetilde \Gamma$ the
same length as that of its projection in $\Gamma$.  Let $d_L$ be the
resulting metric on $\widetilde \Gamma$, which makes $\widetilde
\Gamma$ into an $\mathbb R$-tree. The
action of $F_N$ on this tree, defined via
$\alpha$, is a deck transformation action and thus
minimal, free and discrete.  Hence this
action defines a point in $cv(F_N)$.  Varying the metric structure $L$
on $\Gamma$ produces an open cone $\Delta(\alpha)\subseteq cv(F_N)$ in
$cv(F_N)$, which is homeomorphic to the positive open cone in $\mathbb R^m$.
Here $m$ is
the number of topological edges of $\Gamma$, that is,
$m=\frac{1}{2}\#E\Gamma$.  Thus we can think of a simplicial chart
$\alpha: F_N\to\pi_1(\Gamma,x)$ as defining a local ``coordinate patch"
on $cv(F_N)$.

\smallskip
\noindent
(c) If we require the sum of the lengths of all the topological edges of
$\Gamma$ to be equal to $1$, we get a subset $\Delta'(\alpha)$ of
$cv(F_N)$
that is homeomorphic to an open simplex of dimension $m-1$.  This
subset $\Delta'(\alpha)$ belongs to the subset $j(CV(F_{N}))$ defined in Remark \ref{rem:j}, and hence
projects homeomorphically to its image in $CV(F_N)$.

\smallskip
\noindent
(d)
Moreover, the union of open cones $\Delta(\alpha)$ over all simplicial
charts $\alpha$ is equal to $cv(F_N)$,
and this union is a disjoint union.
Additionally, every point of $cv(F_N)$
belongs to only a finite number of closures $\overline{\Delta}(\alpha)$
of such open cones.  Similarly, the
copy $j(CV(F_N))$ of $CV(F_N)$ in $cv(F_N)$ is the
disjoint union of the open simplices $\Delta'(\alpha)$ over all
simplicial charts $\alpha$, and the closures of these open simplices in $cv(F_N)$
form a locally finite cover of $j(CV(F_N))$.

\end{rem}

\begin{defn}[Cylinders]
Let $\alpha: F_N\to\pi_1(\Gamma,x)$ be a simplicial chart on $F_N$.
For a non-trivial reduced edge-path $\gamma$ in $\widetilde \Gamma$
denote by $Cyl_{\widetilde \Gamma}(\gamma)$ the set of all
$(\xi_1,\xi_2)\in \partial^2 F_N$ such that the bi-infinite geodesic
from $\tilde \alpha(\xi_1)$ to $\tilde \alpha(\xi_2)$ contains
$\gamma$ as a subpath.

We call $Cyl_{\widetilde \Gamma}(\gamma)\subseteq \partial^2 F_N$ the
\emph{two-sided cylinder corresponding to $\gamma$}.
\end{defn}

It is easy to see that $Cyl_{\widetilde \Gamma}(\gamma)\subseteq
\partial^2 F_N$ is both compact and open.  Moreover, the collection of
all such cylinders, where $\gamma$ varies over all non-trivial reduced
edge-paths in $\widetilde \Gamma$, forms a basis of open sets in
$\partial^2 F_N$.

\begin{defn}[Geodesic currents]\label{defn:current}
A \emph{geodesic current}
(or simply {\em current})
on $F_N$ is a positive Radon measure on
$\partial^2 F_N$ which
is $F_N$-invariant and $\sigma$-invariant.  The
set of all geodesic currents on $F_N$ is denoted by $Curr(F_N)$.  The
set $Curr(F_N)$ is endowed with the weak-* topology.
This makes $Curr(F_N)$ into a locally compact space.
\end{defn}

Specifically, let $\alpha: F_N\to\pi_1(\Gamma,x)$ be a simplicial
chart on $F_N$.  Let $\mu_n,\mu\in Curr(F_N)$.  It is not hard to
show~\cite{Ka2} that $\underset{n \to \infty}{\lim}\mu_n=\mu$ in $Curr(F_N)$ if
and only if for every non-trivial reduced edge-path $\gamma$ in
$\widetilde\Gamma$ we have
\[
\underset{n \to \infty}{\lim} \mu_n(Cyl_{\widetilde
\Gamma}(\gamma))=\mu(Cyl_{\widetilde \Gamma}(\gamma)).
\]

Let $\mu\in Curr(F_N)$ and let $v$ be a non-trivial reduced edge-path
in $\Gamma$.  Denote
\[
\langle v, \mu\rangle_{\alpha}:=\mu(Cyl_{\widetilde \Gamma}(\gamma)),
\]
where $\gamma$ is any edge-path in $\widetilde \Gamma$ that is
labelled by $v$.  Since $\mu$ is $F_N$-invariant, this definition does
not depend on the choice of
the lift $\gamma$ of $v$.

\medskip

There is a natural continuous left-action of $Aut(F_N)$ on $Curr(F_N)$
by linear transformations.  Namely, let $\psi\in Aut(F_N)$.  Then
$\psi$ is a quasi-isometry of $F_N$ and hence $\psi$ induces a
homeomorphism $\partial \psi$
of $\partial F_N$ and hence a homeomorphism $\partial^{2}
\psi: \partial^2F_N\to\partial^2F_N$.  Then for $\mu\in Curr(F_N)$ and
$S\subseteq \partial^2F_N$ put
\[
(\psi \mu)(S):=\mu({\partial^{2}\psi}^{-1} S).
\]
It is not hard to check~\cite{Ka2} that $\psi\mu$ is indeed a geodesic
current.  Moreover, the group of inner automorphisms $Inn(F_N)$ is
contained in the kernel of the action of $Aut(F_N)$ on $Curr(F_N)$.
Therefore this action factors through to a continuous action of
$Out(F_N)$ on $Curr(F_N)$.

\begin{notation} \label{ginfty} $ $

\noindent (1) For any $g\in F_N
\smallsetminus \{1\}$ denote by
$g^{\infty}=\underset{n \to \infty}{\lim} g^n$ and $g^{-\infty}=\underset{n \to \infty}{\lim}
g^n$ the two distinct limit points in $\partial F_{N}$.
Hence one obtains
$(g^{-\infty},g^{\infty})\in \partial^2F_N$.

\smallskip
\noindent
(2)
For any $g\in F_N$ we will denote by $[g]_{F_N}$ or just by
$[g]$ the conjugacy class of $g$ in $F_N$.
\end{notation}

\begin{defn}[Counting and Rational Currents]\label{defn:rational}
(1)
Let $g\in F_N$ be a non-trivial element that is not a proper power in
$F_N$.  Set
\[
\eta_g=\sum_{h\in [g]}
\left(\delta_{(h^{-\infty},h^{\infty})}+
\delta_{(h^{\infty},h^{-\infty})}\right) \, ,
\]
where $\delta_{(h^{-\infty},h^{\infty})}$ denotes as usually the
atomic Dirac (or ``counting") measure concentrated at the point
${(h^{-\infty},h^{\infty})}$.

Let $\mathcal R(g)$ be the collection of all $F_N$-translates of
$(g^{-\infty},g^{\infty})$ and $(g^{\infty},g^{-\infty})$ in
$\partial^2 F_N$.
This gives
\[
\eta_g=\sum_{(x,y)\in \mathcal R(g)} \delta_{(x,y)},
\]
and hence $\eta_g$ is $F_N$-invariant and flip-invariant, that is
$\eta_g\in Curr(F_N)$.

\smallskip
\noindent
(2)
Let $g\in F_N \smallsetminus \{1\}$ be
arbitrary.
Write $g=f^m$
where $m\ge 1$ and $f\in F_N$ is not a proper power,
and define:
$$\eta_g:=m \cdot \eta_f.$$
We call $\eta_g\in Curr(F_N)$ the
\emph{counting current} given by $g$.  Non-negative scalar multiples of counting currents are called
\emph{rational currents}.
\end{defn}

It is easy to see that if $[g]=[h]$ then $\eta_g=\eta_h$ and
$\eta_g=\eta_{g^{-1}}$.  Thus $\eta_g$ depends only on the conjugacy
class of $g$ and we will sometimes denote $\eta_{[g]}:=\eta_g$.
Moreover, it is not hard to check~\cite{Ka2} that for $\varphi\in
Out(F_N)$ and $g\in F_N\smallsetminus \{1\}$ we have $\varphi\, \eta_{[g]}=
\eta_{\varphi[g]}$.
One can also give a more explicit combinatorial description of
the counting current $\eta_g$ in terms of counting the numbers of
occurrences of freely reduced words in a ``cyclic word" $w$
representing $g$ (with respect to some fixed free basis of $F_N$).  We
refer the reader to \cite{Ka2} for details.

\begin{prop}\cite{Ka1,Ka2}
The set of all rational currents is dense in the space $Curr(F_N)$.
\end{prop}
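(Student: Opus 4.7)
The plan is to use the simplicial chart $\alpha:F_N\to \pi_1(R_N)$ given by the rose $R_N$ on a free basis $A$ (so $\widetilde R_N = T_A$), and exploit the characterization recalled in Section~\ref{sect:currents}: $\nu_k\to\mu$ in $Curr(F_N)$ iff $\langle v,\nu_k\rangle_\alpha \to \langle v,\mu\rangle_\alpha$ for every nontrivial freely reduced word $v$ in $F(A)$. Thus, given $\mu\in Curr(F_N)$, an integer $n\ge 1$, and $\varepsilon>0$, it suffices to construct one rational current $\nu$ with $|\langle v,\nu\rangle_\alpha - \langle v,\mu\rangle_\alpha|<\varepsilon$ for every freely reduced $v$ with $|v|_A=n$; the estimate for $|v|_A<n$ then follows from the partition $Cyl(v)=\bigsqcup_a Cyl(va)$ over admissible last letters, applied iteratively.

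The numbers $m_v:=\langle v,\mu\rangle_\alpha$ satisfy the Kirchhoff-type identities $m_v=\sum_a m_{va}=\sum_a m_{av}$ and the flip-symmetry $m_v = m_{v^{-1}}$. First I would approximate the finite tuple $(m_v)_{|v|=n}$ by a nonnegative rational tuple $(m'_v)$ obeying the same identities and symmetry, and clear denominators to obtain nonnegative integers $(N_v)$ with $N_v/M = m'_v$. Next I would construct a cyclically reduced word $w\in F(A)$ in which each freely reduced word $v$ of length $n$ occurs cyclically exactly $N_v$ times, by the standard Eulerian-circuit construction: form the directed multigraph $D_n$ whose vertices are the freely reduced words of length $n-1$ and which has $N_v$ parallel edges from the length-$(n-1)$ prefix of $v$ to its length-$(n-1)$ suffix, for each freely reduced $v$ of length $n$. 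Kirchhoff balance gives equal weighted in- and out-degree at every vertex, so on each connected component $D_n$ admits an Eulerian circuit, and reading the edge-labels along such a circuit with natural overlaps spells the desired cyclically reduced $w$ (no cancellation arises because consecutive edges share a reduced length-$(n-1)$ overlap by construction). Then $\nu:=(1/2M)\eta_w$ satisfies $\langle v,\nu\rangle_\alpha = (N_v+N_{v^{-1}})/(2M) = m'_v$, which is within $\varepsilon$ of $m_v$.

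The main obstacle will be the potential disconnectedness of the subgraph of $D_n$ supported on edges with $N_v>0$, since an Eulerian circuit requires connectedness. I expect to handle this in one of two ways: either add an $\varepsilon$-small positive weight to a symmetric collection of ``bridge'' edges linking the components (preserving Kirchhoff balance and flip-symmetry, and using the irreducibility of the ambient subshift of bi-infinite freely reduced words over $A\cup A^{-1}$ to ensure such bridges exist), or first decompose $\mu$ into its ergodic components under the shift action on bi-infinite reduced words, approximate each component separately by a rational current supported on an irreducible sub-SFT, and sum by linearity of $\mu\mapsto \langle v,\mu\rangle_\alpha$. A diagonal argument over $n\to\infty$ and $\varepsilon\to 0$ then produces a sequence of rational currents converging to $\mu$, establishing the density.
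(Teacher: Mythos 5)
The paper itself does not prove this proposition; it is quoted from \cite{Ka1,Ka2}, and your argument is essentially a correct reconstruction of the proof given there: describe a current by its weights (frequencies) on cylinders of a fixed length $n$, note that these satisfy the rational ``switch''/Kirchhoff conditions and flip symmetry, approximate by a rational weight vector in the same rational polyhedral cone, and realize it (after clearing denominators) by a single cyclically reduced word via an Eulerian circuit in the Rauzy-type graph on reduced words of length $n-1$, with the connectedness issue handled by adding an $\epsilon$-small rational, flip-symmetric weight along circuits joining the components (this is legitimate since the graph of all reduced words is strongly connected for $N\ge 2$). Two small remarks: your alternative fix via ergodic decomposition would, as stated, only produce finite nonnegative sums of counting currents, which are not rational currents in the sense of Definition~\ref{defn:rational} (a single $\lambda\eta_g$), and the decomposition need not be finite, so the bridging trick is the right route; also, matching length-$n$ weights within $\epsilon$ controls length-$m<n$ weights only up to a factor $(2N-1)^{n-m}$, so in the diagonal argument you should let $\epsilon=\epsilon(n)$ decay fast enough --- both are easy adjustments, not gaps.
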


\begin{defn}[Projectivized space of geodesic currents]
Let $N\ge 2$.  We define
$$\mathbb PCurr(F_N)=Curr(F_N) \smallsetminus\{0\}\,/\sim$$
where two currents $\mu_1,\mu_2\in Curr(F_N)\smallsetminus\{0\}$
satisfy $\mu_1\sim\mu_2$ if there is some constant
$c>0$ such that $\mu_2=c\, \mu_1$.
For a nonzero current $\mu\in Curr(F_N)$ denote by $[\mu]$ the projective
class of $\mu$, that is, the $\sim$-equivalence class of $\mu$.

The quotient set $\mathbb PCurr(F_N)$ inherits the quotient topology from
$Curr(F_N)$ as well as a continuous action of $Out(F_N)$.  The space
$\mathbb PCurr(F_N)$ is called the \emph{projectivized space of
geodesic currents} (or simply {\em space of projectivized currents})
on $F_N$.
\end{defn}

It is known~\cite{Ka1,Ka2} that $\mathbb PCurr(F_N)$ is compact.

\section{The intersection form}

In this section we will prove the main result of this paper, whose
slightly simplified version was stated in the Introduction as Theorem~A.
We state our result now in its full strength,
using the terminology introduced in the previous sections.

\subsection{Statement of the main result}

\begin{thm}\label{thm:int-form}
Let $N\ge 2$ be an integer.
There exists a unique \emph{geometric intersection form}
$$\langle \ , \ \rangle: \overline{cv}(F_N)\times Curr(F_N)\to
\mathbb R_{\ge 0}$$
with the following properties.

\begin{enumerate}
\item The function $\langle\ , \ \rangle$
is continuous.

\item The function
$\langle\ , \ \rangle$ is
$R_{\ge 0}$-homogeneous in the first argument. Namely,
for any $T\in \overline{cv}(F_N)$, $\mu \in Curr(F_N)$ and
$\lambda\ge 0$ we have:
\[
\langle \lambda T, \mu\rangle=\lambda\langle T,\mu\rangle
\]

\item
The function
$\langle\ , \ \rangle$
 is $R_{\ge 0}$-linear in
the second argument. Namely,
for any $T\in \overline{cv}(F_N)$, $\mu_1,\mu_2\in Curr(F_N)$
$\lambda_1,\lambda_2\ge 0$ we have:
\[
\langle T, \lambda_1 \mu_1+\lambda_2\mu_2\rangle=\lambda_1\langle
T,\mu_1\rangle+\lambda_2\langle T,\mu_2\rangle
\]

\item The function $\langle\ , \ \rangle$
is $Out(F_N)$-invariant: for any $T\in \overline{cv}(F_N)$, $\mu\in Curr(F_N)$ and
$\varphi\in Out(F_N)$ we have:
\[
\langle \varphi T, \varphi \mu\rangle=\langle T,\mu\rangle
\]

\item
For any $T \in \overline{cv}(F_N)$ and any $g \in F_{N}$, with
associated counting current $\eta_{g} \in Curr(F_{N})$, we have:
 \[
\langle T,\eta_{g}\rangle\,\, = \, \,
||g||_T
\]

\item Let $\alpha:F\to\pi_1(\Gamma,x)$ be a simplicial chart on $F$ and let $L:E\Gamma\to \mathbb R_{>0}$ be a metric graph structure on $\Gamma$ and let $T\in cv(F)$ be the tree corresponding to the pull-back of $L$ to $\widetilde\Gamma$, with the action of $F_N$ on $T$ via $\alpha$.
Then for any $\mu\in Curr(F_N)$ we have:
\[
\langle \widetilde \Gamma, \mu\rangle
=\sum_{e\in E^+\Gamma} L(e) \langle e,
\mu\rangle_{\alpha},
\]
where $E\Gamma=E^+\Gamma\sqcup E^{-}\Gamma$ is an orientation on $\Gamma$.
\end{enumerate}
\end{thm}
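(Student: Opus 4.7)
The plan is to prove uniqueness essentially for free, and then to extend the already-constructed intersection form on $cv(F_N)\times Curr(F_N)$ by leveraging Proposition~\ref{prop:lgl} to obtain a uniform Cauchy property in the first argument. Uniqueness follows immediately from density: by (5) the value $\langle T,\eta_g\rangle=\|g\|_T$ is forced, and together with $\mathbb R_{\ge 0}$-linearity~(3) this pins down $\langle T,\mu\rangle$ on the dense subcone of rational currents, so joint continuity~(1) forces it everywhere. For existence, the intersection form of \cite{Ka2,L} already supplies a continuous form satisfying the analogues of (2)--(6) on $cv(F_N)\times Curr(F_N)$, so the task reduces to extending it continuously across $\partial cv(F_N)$.

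The heart of the argument is a uniform estimate. Fix a free basis $A$, with associated Cayley tree $T_A\in cv(F_N)$ (so $\|g\|_{T_A}=\|g\|_A$, and $\langle T_A,\mu\rangle$ plays the role of a continuous ``cyclic word length'' of $\mu$). Given $T_*\in\overline{cv}(F_N)$ and $\epsilon>0$, let $U_\epsilon$ be the neighborhood of $T_*$ supplied by Proposition~\ref{prop:lgl}. The Key Lemma is that
\[
\bigl|\langle T_1,\mu\rangle-\langle T_2,\mu\rangle\bigr|\,\le\,\epsilon\,\langle T_A,\mu\rangle \qquad(\star)
\]
holds for all $T_1,T_2\in U_\epsilon\cap cv(F_N)$ and all $\mu\in Curr(F_N)$. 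I would verify $(\star)$ first for $\mu=\eta_g$, where it is exactly the Proposition~\ref{prop:lgl} bound combined with property~(5); then extend it to rational currents by non-negative linearity and the triangle inequality; and finally to all $\mu$ by density of rational currents together with continuity in the second argument of $\langle T_1,\cdot\rangle$, $\langle T_2,\cdot\rangle$ and $\langle T_A,\cdot\rangle$ (known from the $cv(F_N)$-case of the intersection form).

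With $(\star)$ in hand, the extension is essentially forced. For $T\in\overline{cv}(F_N)$, choose any sequence $T_n\in cv(F_N)$ with $T_n\to T$. Applying $(\star)$ inside successively smaller neighborhoods of $T$ shows that $\{\langle T_n,\mu\rangle\}_n$ is Cauchy in $\mathbb R$ for each $\mu$, with limit independent of the approximating sequence; this defines $\langle T,\mu\rangle$. Properties (2)--(6) then descend from $cv(F_N)$ by passing to limits, using for (5) that $\|g\|_{T_n}\to\|g\|_T$ by the very definition of the topology on $\overline{cv}(F_N)$. Taking limits in $(\star)$ itself yields the same inequality for $T_1,T_2\in U_\epsilon\cap\overline{cv}(F_N)$. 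Joint continuity (1) now falls out of the split
\[
|\langle T_k,\mu_k\rangle-\langle T,\mu\rangle|\le|\langle T_k,\mu_k\rangle-\langle T,\mu_k\rangle|+|\langle T,\mu_k\rangle-\langle T,\mu\rangle|,
\]
bounding the first term via the extended $(\star)$ by $\epsilon\,\langle T_A,\mu_k\rangle$ (and noting $\langle T_A,\mu_k\rangle\to\langle T_A,\mu\rangle$ by continuity on $cv(F_N)\times Curr(F_N)$), and the second by the fact that $\langle T,\cdot\rangle$ is a locally uniform limit of the continuous functions $\langle T_n,\cdot\rangle$, where uniformity in neighborhoods of a fixed $\mu$ again follows from $(\star)$. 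The main obstacle is the Key Lemma $(\star)$, which is precisely what Proposition~\ref{prop:lgl} was crafted to deliver; once that uniform-scaling control on translation lengths is available, the remainder is a routine Cauchy/limit extension.
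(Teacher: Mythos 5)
Your proposal is correct and follows essentially the same route as the paper: both arguments rest on Proposition~\ref{prop:lgl} together with the already-known form on $cv(F_N)\times Curr(F_N)$, an estimate of the shape $\big|\langle T_1,\nu\rangle-\langle T_2,\nu\rangle\big|\le 2\epsilon\,\langle T_A,\nu\rangle$ for $T_1,T_2$ in the neighborhood $U_\epsilon$ (your Key Lemma is the paper's Lemma~\ref{lem:use}), a Cauchy/limit definition of the boundary extension, the same triangle-inequality continuity argument, and uniqueness from density of rational currents. The only, harmless, organizational difference is that the paper defines $\langle T,\mu\rangle$ for $T\in\partial cv(F_N)$ as $\lim_{i\to\infty}\lambda_i\|g_i\|_T$ over rational currents $\lambda_i\eta_{g_i}\to\mu$ (Proposition~\ref{prop:limit} and Definition~\ref{defn:ext}), whereas you define it as $\lim_{n\to\infty}\langle T_n,\mu\rangle$ over trees $T_n\in cv(F_N)$ with $T_n\to T$; the two limits agree and the estimates involved are identical.
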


\begin{rem}\label{rem:easy}$\, $

(a) Note that conditions (1), (3) and (5) already imply that if such an intersection form exists, then it is unique.
Indeed, recall that the set of rational currents is dense in $Curr(F)$. Thus if $\mu\in Curr(F)$ then there exists a sequence of rational currents $\lambda_i \eta_{g_i}$ such that $\mu=\lim_{i\to\infty} \lambda_i\eta_{g_i}$. Hence the continuity and linearity of the intersection form imply that
\[
\langle T, \mu\rangle=\lim_{i\to\infty} \lambda_i ||g_i||_T.
\]

Thus Theorem~\ref{thm:int-form} implicitly implies that the above limit exists and does not depend on the choice of the sequence of rational currents converging to $\mu$.

(b) For the case of $cv(F_N)$ the statement of Theorem~\ref{thm:int-form} was already obtained in \cite{Ka2,L}, where we constructed the intersection form with the required properties on $cv(F_N)\times Curr(F_N)$. The main difficulty that had to be overcome in the present paper is to prove that that intersection form admits a continuous ``boundary" extension to a continuous map $\overline{cv}(F_N)\times Curr(F_N)\to \mathbb R$.

\smallskip
\noindent

(c) Note that the
$Out(F_{N})$-equivariance equality given in
part (4) of Theorem \ref{thm:int-form}
is equivalent to the formula $$\langle T \varphi, \mu \rangle =
\langle T , \varphi \mu \rangle \, ,$$
as follows directly from the fact that the left side of this equation
is equal to $\langle \varphi^{-1} T , \mu \rangle$ (see the definition of
the $Out(F_{N})$-action in Section~\ref{sect:cv}).
\end{rem}

\subsection{The case of $cv(F_N)$}$ $

\smallskip

In \cite{Ka2,L} we established the statement of Theorem~\ref{thm:int-form} for $cv(F_N)$:

\begin{propdfn}[Intersection Form on $cv(F_N)$]\label{pd:int-form}

Let $N\ge 2$. There exists a unique map
\[
\langle\ , \ \rangle: cv(F_N)\times Curr(F_N)\to \mathbb R_{\ge 0}
\]
satisfying conditions (1)-(6) of Theorem~\ref{thm:int-form} for arbitrary $T\in cv(F)$.

For $T\in cv(F_N)$ and $\mu\in Curr(F_N)$ we call $\langle T,\mu\rangle$ the
\emph{geometric intersection number of $T$ and $\mu$}.
\end{propdfn}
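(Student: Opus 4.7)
The plan is to establish existence by an explicit construction and to obtain uniqueness essentially for free. For uniqueness, note that conditions (1), (3), and (5) already determine the map: the rational currents $\{\lambda\eta_{g}\}$ are dense in $Curr(F_N)$, condition (5) pins down $\langle T,\eta_g\rangle=\|g\|_T$, and continuity together with $\mathbb R_{\ge 0}$-linearity in the second argument extends this uniquely to all of $Curr(F_N)$.

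For existence, I would use the cell decomposition of $cv(F_N)$ recalled in Remark~\ref{rem:mgs}. Given $T\in cv(F_N)$, choose the simplicial chart $\alpha\colon F_N\to\pi_1(\Gamma,x)$ with $T\in\Delta(\alpha)$ and edge-length function $L\colon E\Gamma\to\mathbb R_{>0}$, and set
\[
\langle T,\mu\rangle:=\sum_{e\in E^+\Gamma}L(e)\,\mu\bigl(Cyl_{\widetilde\Gamma}(e)\bigr)
\]
for some orientation $E^+\Gamma$. Flip-invariance of $\mu$ gives $\mu(Cyl_{\widetilde\Gamma}(e))=\mu(Cyl_{\widetilde\Gamma}(e^{-1}))$, so the sum is independent of the chosen orientation. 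Properties (2), (3), and (6) are immediate from the formula.

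The heart of the argument is to verify the counting-current identity $\langle T,\eta_g\rangle=\|g\|_T$ required by (5); this simultaneously forces the definition to be chart-independent, since the rational currents are dense and the right-hand side depends only on $T$. For $g\in F_N\setminus\{1\}$ not a proper power, represent the conjugacy class $[g]$ by a cyclically reduced edge-loop $v=e_1\cdots e_k$ in $\Gamma$ via $\alpha$, so $\|g\|_T=\sum_{i=1}^{k}L(e_i)$. Using the standard combinatorial description of $\eta_g$ as the measure counting occurrences of freely reduced words in the cyclic word representing $g$ (see \cite{Ka2}), one checks that $\eta_g(Cyl_{\widetilde\Gamma}(e))$ equals the total number of occurrences of $e$ and of $e^{-1}$ in $v$, and the weighted sum collapses to $\sum_{i=1}^k L(e_i)$. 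The proper-power case then follows from $\eta_{f^m}=m\eta_f$ and Remark~\ref{powers}. Property~(4), $Out(F_N)$-equivariance, is a bookkeeping check: replacing the chart $\alpha$ by $\alpha\circ\widehat{\varphi}^{-1}$ and transporting cylinders via $\partial^2\widehat\varphi$ converts the defining sum for $\langle T\varphi,\mu\rangle$ into that for $\langle T,\varphi\mu\rangle$.

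The main obstacle is continuity~(1). Since each open cone $\Delta(\alpha)$ is open in $cv(F_N)$ and the cones are pairwise disjoint, if $T_n\to T$ with $T\in\Delta(\alpha)$ then eventually $T_n\in\Delta(\alpha)$, with edge-lengths $L_n(e)\to L(e)$ for every $e\in E\Gamma$, by definition of the local parametrization of $\Delta(\alpha)$ by edge-length vectors. Simultaneously, $\mu_n\to\mu$ in the weak-$*$ topology yields $\mu_n(Cyl_{\widetilde\Gamma}(e))\to\mu(Cyl_{\widetilde\Gamma}(e))$ for each of the finitely many clopen cylinders appearing in the sum. Since the defining sum has only finitely many terms, termwise convergence gives $\langle T_n,\mu_n\rangle\to\langle T,\mu\rangle$, establishing joint continuity and completing the construction.
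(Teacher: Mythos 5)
The paper itself does not reprove this statement -- it quotes it from \cite{Ka2,L} -- so your proposal has to stand on its own, and its overall architecture is the natural one: define $\langle T,\mu\rangle$ chart-wise by the formula of condition (6), verify $\langle T,\eta_g\rangle=\|g\|_T$ by the occurrence count, and get uniqueness from density of rational currents exactly as in Remark~\ref{rem:easy}(a). Those parts are fine. The problem is the continuity argument, which rests on a false premise: the cones $\Delta(\alpha)$ are \emph{not} open subsets of $cv(F_N)$. They are homeomorphic to open cones in $\mathbb R^m$, but only the top-dimensional cells (trivalent quotient graph, $m=3N-3$) are open in $cv(F_N)$; a lower-dimensional cell lies in the closure of the cells obtained by blowing up its vertices. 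For instance in $cv(F_2)$ a sequence of trees with theta-graph quotient and one edge length tending to $0$ converges to a tree with rose quotient, and no term of the sequence ever lies in the rose chart. So the step ``$T_n\to T\in\Delta(\alpha)$ implies $T_n\in\Delta(\alpha)$ eventually'' fails at every non-maximal cell, and your termwise-convergence argument only proves continuity at points whose quotient graph is trivalent.

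What is missing is precisely the cross-chart comparison, which is where the real work in \cite{Ka2,L} lies. To repair the argument at a point $T\in\Delta(\alpha')$ one must use the local finiteness of the cover by closed cones to reduce to a sequence $T_n\in\Delta(\alpha)$ with $\Gamma'$ obtained from $\Gamma$ by collapsing a forest, show that convergence of length functions forces the edge lengths $L_n(e)$ to converge to $0$ on collapsed edges and to the corresponding lengths of $T$ on the surviving edges, and identify $Cyl_{\widetilde\Gamma}(e)$ with $Cyl_{\widetilde\Gamma'}(e')$ under the equivariant collapse map $\widetilde\Gamma\to\widetilde\Gamma'$ (which sends bi-infinite geodesics onto bi-infinite geodesics and is a bijection on non-collapsed edges), so that the chart-$\alpha$ sums for $(T_n,\mu_n)$ converge to the chart-$\alpha'$ sum for $(T,\mu)$. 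None of this appears in your proof, and without it the joint continuity in (1) -- the one property that is actually delicate on $cv(F_N)$ -- is not established.
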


Note, that, as we have seen in Remark~\ref{rem:easy}, if $T\in cv(F_N)$ and $\mu\in Curr(F_N)$ is represented as
$\mu=\lim_{i\to\infty} \lambda_i\eta_{g_i}$ for some $g_i\in F_N$ and $\lambda_i\ge 0$ then
\[
\langle T, \mu\rangle=\lim_{i\to\infty} \lambda_i ||g_i||_T.
\]

\subsection{Continuous extension of the intersection form to $\overline{cv}(F_N)$.}$ $

\smallskip

The main tool to prove the existence of a continuous extension of the intersection form to $\overline{cv}(F_N)$ will be
Proposition~\ref{prop:lgl}.
We first prove:
\begin{prop}\label{prop:limit}
Let $T\in \overline{cv}(F_N)$ and let $\mu\in Curr(F_N)$ be such that
$\mu=\lim_{i\to\infty}\lambda_i \eta_{g_i}$ for some $g_i\in F_N$ and
$\lambda_i\ge 0$.  Then the limit
\[
\lim_{i\to\infty}\lambda_i ||g_i||_T
\]
exists and does not depend on the choice of the sequence $\lambda_i
\eta_{g_i}$ of the rational currents that converges to $\mu$.
\end{prop}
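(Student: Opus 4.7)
The plan is to use the Uniform Scaling Approximation Property (Proposition~\ref{prop:lgl}) to pull back the already-established convergence on $cv(F_N)$ (Proposition-Definition~\ref{pd:int-form}) to the boundary tree $T\in\overline{cv}(F_N)$. Since $cv(F_N)$ is dense in $\overline{cv}(F_N)$ and we have complete control over how translation lengths scale near $T$, this should let us run a Cauchy argument.

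First I would establish the existence of the limit. Fix a free basis $A$ of $F_N$ and write $\|g\|_A = \|g\|_{T_A}$. Since $T_A \in cv(F_N)$ and $\lambda_i\eta_{g_i} \to \mu$ in $Curr(F_N)$, the continuity of the intersection form on $cv(F_N)$ (Proposition-Definition~\ref{pd:int-form}(1)) yields
\[
\lambda_i \|g_i\|_A = \langle T_A, \lambda_i\eta_{g_i}\rangle \;\longrightarrow\; \langle T_A,\mu\rangle,
\]
so there is a constant $M > 0$ with $\lambda_i\|g_i\|_A \le M$ for all $i$. Given $\epsilon > 0$, Proposition~\ref{prop:lgl} supplies a neighborhood $U_\epsilon$ of $T$ in $\overline{cv}(F_N)$ on which cyclically reduced lengths are uniformly $\epsilon$-close up to the scale $\|\cdot\|_A$. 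Using density of $cv(F_N)$, pick any $T' \in U_\epsilon \cap cv(F_N)$. By Proposition-Definition~\ref{pd:int-form} applied to $T'$, the sequence $\lambda_i\|g_i\|_{T'}$ converges to $\langle T',\mu\rangle$, so is Cauchy. Meanwhile for every $i$,
\[
\bigl|\lambda_i\|g_i\|_T - \lambda_i\|g_i\|_{T'}\bigr| \le \epsilon\, \lambda_i\|g_i\|_A \le \epsilon M.
\]
Combining these via the triangle inequality shows $(\lambda_i\|g_i\|_T)$ is Cauchy, hence convergent. (One subtlety: Proposition~\ref{prop:lgl} is stated for cyclically reduced words, so one should reduce an arbitrary $g_i$ to a cyclically reduced conjugate; but $\|g\|_T$ and $\eta_g$ depend only on the conjugacy class of $g$, so this is harmless.)

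For independence from the approximating sequence, I would use an interleaving trick. Given two sequences $(\lambda_i\eta_{g_i})_i$ and $(\lambda_j'\eta_{h_j})_j$ both converging to $\mu$ in $Curr(F_N)$, the interleaved sequence $\lambda_1\eta_{g_1},\lambda_1'\eta_{h_1},\lambda_2\eta_{g_2},\lambda_2'\eta_{h_2},\ldots$ also converges to $\mu$. By the previous paragraph, the corresponding interleaved sequence of numbers $\lambda_1\|g_1\|_T,\lambda_1'\|h_1\|_T,\ldots$ converges, which forces the two original subsequences to share a common limit. The main obstacle I anticipate is verifying the uniform boundedness of $\lambda_i\|g_i\|_A$, which is what lets Proposition~\ref{prop:lgl} translate into a usable estimate; that step is exactly where we need the already-existing intersection form on $cv(F_N)$ to supply an a priori bound, and it is also precisely why a uniform (rather than pointwise) scaling approximation statement is essential here.
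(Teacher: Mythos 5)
Your argument is essentially identical to the paper's proof: both use Proposition~\ref{prop:lgl} to compare $\|g_i\|_T$ with $\|g_i\|_{T'}$ for a nearby $T'\in U_\epsilon\cap cv(F_N)$, invoke the already-established intersection form on $cv(F_N)$ to get convergence of $\lambda_i\|g_i\|_{T'}$ and the a priori bound on $\lambda_i\|g_i\|_A$, and conclude the sequence is Cauchy, with independence of the approximating sequence obtained by the same interleaving (mixing) trick. The only cosmetic difference is that you fix a uniform bound $M$ on $\lambda_i\|g_i\|_A$ up front, whereas the paper bounds it by $\langle T_A,\mu\rangle+\epsilon_1$ for large $i$; this changes nothing of substance.
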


\begin{proof}
Fix a free basis $A$ of $F_N$.
Let $g_i\in F_N$ and $\lambda_i\ge 0$ be such that
$\mu=\lim_{i\to\infty}\lambda_i \eta_{g_i}$.  We claim that
$\lambda_i||g_i||_T$ is a Cauchy sequence of real numbers and hence
has a finite limit.

Let $\epsilon>0$ be arbitrary.
Choose $0<\epsilon_1<\epsilon$ such that $2\epsilon_1(\langle T_A,
\mu\rangle+\epsilon_1)+\epsilon_{1}
\le\epsilon$.  Note that we allow for the
possibility that $\mu=0$.

Let $U_{\epsilon_1}$ be the neighborhood of $T$ provided by
Proposition~\ref{prop:lgl}.  Choose a tree $T'\in U_{\epsilon_1}$ such
that $T'\in cv(F_N)$.  Then $\big| ||g_i||_T-||g_i||_{T'} \big|\le
\epsilon_1 ||g_i||_A$ and hence
\[
\big| \lambda_i||g_i||_T-\lambda_i||g_i||_{T'} \big|\le
\epsilon_1\lambda_i ||g_i||_A.
\]

Recall that $\lim_{i\to\infty} \lambda_i||g_i||_{T'}=\langle
T',\mu\rangle$ and $\lim_{i\to\infty} \lambda_i||g_i||_{A}=\langle
T_A,\mu\rangle$ since $T',T_A\in cv(F_N)$.

Thus there is $i_0\ge 1$ such that for every $i\ge i_0$ we have
$|\lambda_i||g_i||_{T'}-\langle T',\mu\rangle|\le \epsilon_1$ and
$\lambda_i||g_i||_{A}\le \langle T_A,\mu\rangle+\epsilon_1$.

Thus for every $i\ge i_0$ we have
\[
\big| \lambda_i||g_i||_T-\langle T',\mu\rangle \big|\le
\epsilon_1(\langle T_A, \mu\rangle+\epsilon_1)
+\epsilon_1.
\]
This implies that the sequence $\lambda_i||g_i||_T$ is bounded and,
moreover, for any $i,j\ge i_0$
\[
\big| \lambda_i||g_i||_T-\lambda_j||g_j||_T\big|\le
2(\epsilon_1(\langle T_A, \mu\rangle+\epsilon_1) +\epsilon_1)\le \epsilon.
\]
Since $\epsilon>0$ was arbitrary, this shows that $\lambda_i||g_i||_T$
is a Cauchy sequence of real numbers and hence has a finite limit in
$\mathbb R$.

It is now clear that this limit does not depend on the choice of a
sequence of rational currents $\lambda_i\eta_{g_i}$ such that
$\underset{i\to\infty}{\lim} \lambda_i\eta_{g_i}=\mu$,
since one can mix any two such sequences together to produce a new sequence of rational currents
also limiting to $\mu$.

\end{proof}

Proposition~\ref{prop:limit} implies that the following notion is
well-defined:
\begin{defn}[Intersection form on $\overline{cv}(F_N)$]\label{defn:ext}
Let $T\in \overline{cv}(F_N)$ and let $\mu\in Curr(F_N)$.  Put
\[
\langle T,\mu\rangle=\lim_{i\to\infty} \lambda_i ||g_i||_T
\]
where $g_i\in F_N$ and $\lambda_i\ge 0$ are any such that
$\mu=\lim_{i\to\infty} \lambda_i \eta_{g_i}$.
\end{defn}
Note that the intersection number from Definition~\ref{defn:ext} agrees with the intersection number from
from Proposition-Definition~\ref{pd:int-form} for arbitrary $T\in cv(F_N)$ and $\mu\in Curr(F_N)$.

\begin{lem}\label{lem:use}
Let $A$ be a free basis of $F_N$.  Let $T\in \overline{cv}(F_N)$.  Let
$\epsilon>0$ and let $U_\epsilon$ be the neighborhood of $T$ in
$\overline{cv}(F_N)$ provided by Proposition~\ref{prop:lgl}.  Then for
any $T_1,T_2\in U_\epsilon$ and for any $\nu\in Curr(F_N)$ have
\[
\big| \langle T_1,\nu\rangle-\langle T_2,\nu\rangle \big|\le 2\epsilon
\langle T_A, \nu\rangle.
\]

\end{lem}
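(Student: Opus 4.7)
The strategy is to reduce from a general $\nu \in Curr(F_N)$ to counting currents via density, apply Proposition~\ref{prop:lgl} to each counting current in the approximating sequence, and then pass to the limit using Definition~\ref{defn:ext}. Since all the relevant machinery has just been set up, I do not expect any substantive obstacle; the argument should be a one-line consequence of the tools already established.

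In detail, by the density of rational currents in $Curr(F_N)$, choose a sequence $\lambda_i \ge 0$ and $g_i \in F_N$ with $\nu = \lim_{i\to\infty} \lambda_i \eta_{g_i}$. For each $i$, Proposition~\ref{prop:lgl} applied to the pair $T_1, T_2 \in U_\epsilon$ yields
\[
\bigl| \|g_i\|_{T_1} - \|g_i\|_{T_2} \bigr| \le \epsilon \|g_i\|_A = \epsilon \|g_i\|_{T_A},
\]
where the last equality uses the convention $\|g\|_A = \|g\|_{T_A}$ recorded at the start of the section. Multiplying both sides by $\lambda_i \ge 0$ gives
\[
\bigl| \lambda_i \|g_i\|_{T_1} - \lambda_i \|g_i\|_{T_2} \bigr| \le \epsilon\, \lambda_i \|g_i\|_{T_A}.
\]

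Now I let $i \to \infty$. By Definition~\ref{defn:ext} (whose well-definedness is guaranteed by Proposition~\ref{prop:limit}), $\lambda_i \|g_i\|_{T_j} \to \langle T_j, \nu\rangle$ for $j = 1, 2$. Since $T_A \in cv(F_N)$, Proposition-Definition~\ref{pd:int-form} and the observation immediately following it give $\lambda_i \|g_i\|_{T_A} \to \langle T_A, \nu\rangle$. Passing to the limit in the above inequality then produces
\[
\bigl| \langle T_1, \nu\rangle - \langle T_2, \nu\rangle \bigr| \le \epsilon \langle T_A, \nu\rangle,
\]
which is in fact stronger than the stated bound $2\epsilon \langle T_A, \nu\rangle$; the factor of $2$ in the statement is presumably extra slack for later applications. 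The only point worth double-checking is that the three sequences above really do converge, but this is precisely what Proposition~\ref{prop:limit} and Proposition-Definition~\ref{pd:int-form} furnish, so no genuine difficulty arises.
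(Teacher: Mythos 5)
Your proof is correct and follows essentially the same route as the paper: approximate $\nu$ by rational currents $\lambda_i\eta_{g_i}$, apply Proposition~\ref{prop:lgl} to each $g_i$, and invoke the defining limits $\lambda_i\|g_i\|_{T_j}\to\langle T_j,\nu\rangle$ and $\lambda_i\|g_i\|_A\to\langle T_A,\nu\rangle$ from Definition~\ref{defn:ext} and Proposition-Definition~\ref{pd:int-form}. By passing to the limit directly in the inequality, instead of estimating at a fixed large $i$ with an auxiliary $\epsilon_1$ as the paper does, you even obtain the sharper constant $\epsilon$ in place of $2\epsilon$; the paper's factor $2$ is merely slack from its finite-stage triangle-inequality argument.
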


\begin{proof}

The statement is obvious if $\nu=0$ so we will assume that $\nu\ne 0$.
Hence $\langle T_A,\nu\rangle>0$ and $\langle T_0,\nu\rangle>0$.  Let
$\epsilon_1>0$ be such that $\epsilon (\langle
T_A,\nu\rangle+\epsilon_1)+2\epsilon_1\le 2\epsilon\langle
T_A,\nu\rangle$.

Let $\nu=\lim_{i\to\infty} \lambda_i\eta_{g_i}$ for some $g_i\in F_N$
and $\lambda_i\ge 0$.  Choose $i_0\ge 1$ such that for every $i\ge
i_0$
\[
\big| \langle T_j,\nu\rangle-\lambda_i||g_i||_{T_j}\big|\le
\epsilon_1, \text{ for } j=1,2
\]
and
\[
\big| \langle T_A,\nu\rangle-\lambda_i||g_i||_{A}\big|\le \epsilon_1.
\]

Then for $i\ge i_0$ we have, by Proposition~\ref{prop:lgl}:
\begin{gather*}
\big| \langle T_1,\nu\rangle-\langle T_2,\nu\rangle \big|\le \big|
\lambda_i||g_i||_{T_1}-\lambda_i||g_i||_{T_2}\big|+2\epsilon_1\le \\
\le \epsilon\lambda_i ||g_i||_A+2\epsilon_1\le \epsilon (\langle
T_A,\nu\rangle+\epsilon_1)+2\epsilon_1\le 2\epsilon\langle
T_A,\nu\rangle.
\end{gather*}
\end{proof}

\begin{proof}[Proof of Theorem \ref{thm:int-form}]

We first show that the map $\langle\, , \, \rangle: \overline{cv}(F_N)\times Curr(F_N)\to \mathbb R_{\ge 0}$, given in
Definition~\ref{defn:ext}, is continuous.

Choose a free basis $A$ of $F_N$,
and let $T\in \overline{cv}(F_N)$, $\mu\in Curr(F_N)$ and $\epsilon>0$ be
arbitrary.

Let $\epsilon_1>0$ be such that $4\epsilon_1 \langle T_A,\mu\rangle\le
\epsilon/2$.  Let $\epsilon_2>0$ be such that
$2\epsilon_1\epsilon_2+\epsilon_2\le \epsilon/2$.

Let $U_{\epsilon_1}\subseteq \overline{cv}(F_N)$ be the neighborhood
of $T$ in $\overline{cv}(F_N)$ provided by Proposition~\ref{prop:lgl}.
Choose $T_0\in U_{\epsilon_1}\cap cv(F_N)$.

Since $\langle \ , \ \rangle: {cv}(F_N)\times Curr(F_N)\to \mathbb R$
is continuous and since $T_0,T_A\in cv(F_N)$, there exists a
neighborhood $V$ of $\mu$ in $Curr(F_N)$ such that for every $\mu'\in
V$ we have
\[
\big| \langle T_0,\mu'\rangle- \langle T_0,\mu\rangle\big|\le
\epsilon_2
\]
and
\[
\big| \langle T_A,\mu'\rangle- \langle T_A,\mu\rangle\big|\le
\epsilon_2.
\]

Now let $T'\in U_{\epsilon_1}$ and $\mu'\in V$ be arbitrary.  By
Lemma~\ref{lem:use} we have
\begin{gather*}
\big| \langle T',\mu'\rangle- \langle T,\mu\rangle\big|\le\\ \big|
\langle T',\mu'\rangle- \langle T_0,\mu'\rangle\big|+\big| \langle
T_0,\mu'\rangle- \langle T_0,\mu\rangle\big|+\big| \langle
T_0,\mu\rangle- \langle T,\mu\rangle\big|\le \\
\le 2\epsilon_1 \langle T_A,\mu'\rangle+\epsilon_2+2\epsilon_1\langle
T_A,\mu\rangle\le\\
2\epsilon_1 \langle
T_A,\mu\rangle+2\epsilon_1\epsilon_2+\epsilon_2+2\epsilon_1\langle
T_A,\mu\rangle=4\epsilon_1 \langle
T_A,\mu\rangle+2\epsilon_1\epsilon_2+\epsilon_2\le \epsilon.
\end{gather*}

Since $\epsilon>0$ was arbitrary, this implies that $\langle \ , \
\rangle: \overline{cv}(F_N)\times Curr(F_N)\to \mathbb R$ is
continuous at the point $(T,\mu)$.  Since $(T,\mu)\in
\overline{cv}(F_N)\times Curr(F_N)$ was arbitrary, it follows that
$\langle \ , \ \rangle: \overline{cv}(F_N)\times Curr(F_N)\to \mathbb
R$ is continuous, as required. This establishes part (1) of Theorem \ref{thm:int-form}.

The fact that parts (1)-(5) of Theorem \ref{thm:int-form} hold now follows by continuity from
the same properties known to hold for $\langle \ , \ \rangle:
{cv}(F_N)\times Curr(F_N)\to \mathbb R$.
Part (6) of Theorem \ref{thm:int-form} only concerns $\mathbb R$-trees from $cv(F_N)$ and is thus already known (see Proposition-Definition~\ref{pd:int-form} above).
\end{proof}

\section{The intersection form and iwip automorphisms of $F_{N}$}

\begin{notation}
Note that if $T, T'\in \overline{cv}(F_N)$ and $\mu,\mu'\in Curr(F_N),
\mu\ne 0, \mu'\ne 0$ are such that $[T]=[T']$ and $[\mu]=[\mu']$ then
$\langle T,\mu\rangle=0$ if and only if $\langle T',\mu'\rangle=0$.
Therefore for $x\in \overline {CV}(F_N)$, $y\in \mathbb PCurr(F_N)$ we
will write $\langle x,y\rangle=0$ if for some (or equivalently, for any)
$T\in \overline{cv}(F_N)$, $\mu\in Curr(F_N)$ with $[T]=x$ and
$[\mu]=y$ we have $\langle T,\mu\rangle=0$.
\end{notation}

\begin{lem}\label{lem:0}
Let $[T_n],[T]\in \overline {CV}(F_N)$ and $[\mu_n]\in \mathbb
PCurr(F_N)$ be such that $\underset{n \to \infty}{\lim} [T_n]=[T]$ and
$\underset{n \to \infty}{\lim}[\mu_n]=[\mu]$, and such that $\langle
[T_n],[\mu_n]\rangle=0$ for every $n\ge 1$.  Then
$$\langle
[T],[\mu]\rangle=0.$$
\end{lem}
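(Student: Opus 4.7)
The plan is straightforward: lift the projective convergences to convergences of honest representatives in the unprojectivized spaces $\overline{cv}(F_N)$ and $Curr(F_N)$, and then apply the continuity of the geometric intersection form established in Theorem~\ref{thm:int-form}(1).

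First, I would remark that the condition $\langle x,y\rangle=0$ for $(x,y)\in \overline{CV}(F_N)\times\mathbb{P}Curr(F_N)$ is truly well-defined on the projective level: by parts (2) and (3) of Theorem~\ref{thm:int-form}, replacing $T$ by $cT$ and $\mu$ by $c'\mu$ (with $c,c'>0$) multiplies $\langle T,\mu\rangle$ by $cc'$, so vanishing is invariant. Now the lifting step. The projection $\overline{cv}(F_N)\to\overline{CV}(F_N)$ is the quotient by the $\mathbb{R}_{>0}$-homothety action on length functions, and since every element of $\overline{cv}(F_N)$ gives a minimal nontrivial action, its length function is not identically zero. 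Consequently, the hypothesis $\underset{n\to\infty}{\lim}[T_n]=[T]$ is equivalent to the existence of scalars $\lambda_n>0$ with $\lambda_n T_n\to T$ in $\overline{cv}(F_N)$, i.e.\ $\lambda_n\,||g||_{T_n}\to ||g||_T$ for every $g\in F_N$. Analogously, from $\underset{n\to\infty}{\lim}[\mu_n]=[\mu]$ we extract scalars $\nu_n>0$ such that $\nu_n\mu_n\to\mu$ in $Curr(F_N)$.

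Set $T_n':=\lambda_n T_n$ and $\mu_n':=\nu_n\mu_n$. Since $\langle[T_n],[\mu_n]\rangle=0$ means $\langle T_n,\mu_n\rangle=0$, the homogeneity/linearity properties give
\[
\langle T_n',\mu_n'\rangle=\lambda_n\nu_n\,\langle T_n,\mu_n\rangle=0
\]
for every $n\ge 1$. By the continuity of $\langle\,,\,\rangle$ on $\overline{cv}(F_N)\times Curr(F_N)$ (Theorem~\ref{thm:int-form}(1)), the pair $(T_n',\mu_n')\to(T,\mu)$ forces
\[
\langle T,\mu\rangle=\lim_{n\to\infty}\langle T_n',\mu_n'\rangle=0,
\]
and hence $\langle[T],[\mu]\rangle=0$, as desired.

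The only step that is not purely formal is the lift, but this is standard: both $\overline{cv}(F_N)$ and $Curr(F_N)\smallsetminus\{0\}$ embed into an $\mathbb{R}^{\omega}$-type product space (via translation length functions, respectively via the values $\mu\mapsto\mu(Cyl_{\widetilde\Gamma}(\gamma))$ on cylinders), with the homothety action given by genuine scalar multiplication of coordinates, so convergence in the projectivization reduces to the existence of rescalings making the representatives converge. With that observation in place the proof is just continuity plus bilinearity, so I do not foresee any genuine obstacle.
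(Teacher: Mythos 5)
Your proposal is correct and takes essentially the same route as the paper: the paper's proof likewise picks scalars $r_n, c_n \ge 0$ with $r_nT_n \to T$ in $\overline{cv}(F_N)$ and $c_n\mu_n \to \mu$ in $Curr(F_N)$, uses homogeneity/linearity to get $\langle r_nT_n, c_n\mu_n\rangle = 0$, and concludes by continuity (Theorem~\ref{thm:int-form}(1)). Your additional remarks on the well-definedness of projective vanishing and on why the lift of projective convergence to rescaled representatives exists are correct but are simply left implicit in the paper.
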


\begin{proof}
There exist $r_n\ge 0$ and $c_n\ge 0$ such that $T=\underset{n \to
\infty}{\lim} r_nT_n$ and $\mu=\underset{n \to \infty}{\lim}
c_n\mu_n$.  By linearity of the intersection form
we have $\langle
r_nT_n,c_n\mu_n\rangle=r_nc_n\langle T_n,\mu_n\rangle=0$.  Hence by
continuity (part (1) of Theorem~\ref{thm:int-form})
we have $\langle
T,\mu\rangle=\underset{n \to \infty}{\lim}\langle
r_nT_n,c_n\mu_n\rangle=0$, as required.
\end{proof}

\begin{defn}[IWIP]\label{defn:iwip}
As in \cite{LL}, we say that an outer automorphism
$\varphi\in Out(F_N)$ is
{\em irreducible with irreducible powers}  or an \emph{iwip}
if no conjugacy class of any non-trivial proper free factor of $F_N$ is mapped
by a positive power of $\varphi$ to itself.
\end{defn}

It is known that if such an iwip $\varphi$
is without periodic conjugacy classes, then
$\varphi$ has a ``North-South" dynamics for its induced actions on both, $\overline
{CV}(F_N)$ and $\mathbb PCurr(F_N)$:

\begin{prop}\label{prop:ns}
Let $N\ge 3$ and let $\varphi\in Out(F_N)$ be irreducible with
irreducible powers.  Then the following hold:
\begin{enumerate}
\item {\rm (Levitt-Lustig~\cite{LL})} The action of $\varphi$ on
$\overline {CV}(F_N)$ has precisely two distinct fixed points, $[T_+]$
and $[T_-]$, that both belong to $\partial CV(F_N)$.  Moreover, for
any $[T]\ne [T_-]$ in $\overline {CV}(F_N)$ we have
$\underset{n \to \infty}{\lim} \varphi^n[T]=[T_+]$.  Similarly, for any
$[T]\ne [T_+]$
in $\overline {CV}(F_N)$ we have $\underset{n \to \infty}{\lim}
\varphi^{-n}[T]=[T_-]$.

\item {\rm (Reiner Martin~\cite{Ma})} Suppose in addition that $\varphi$
has no periodic conjugacy classes in $F_N$.  Then the action of $\varphi$
on $\mathbb PCurr(F_N)$ has precisely two distinct fixed points
$[\mu_+]$ and $[\mu_-]$.  Moreover, for any $[\mu]\ne [\mu_-]$ in $\mathbb
PCurr(F_N)$
we have $\underset{n \to \infty}{\lim}
\varphi^n[\mu]=[\mu_+]$.  Similarly, for any $[\mu]\ne [\mu_+]$ in
$\mathbb PCurr(F_N)$
we have $\underset{n \to \infty}{\lim}
\varphi^{-n}[\mu]=[\mu_-]$.
\end{enumerate}
\end{prop}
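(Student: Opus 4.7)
\medskip

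\noindent\emph{Proof proposal.}
Both parts of the proposition are credited to the literature, so the most economical argument is simply to invoke \cite{LL} for (1) and \cite{Ma} for (2). Nonetheless, let me sketch the strategy one would follow to prove them from scratch.

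For part (1), my plan would be to produce the two fixed points via Bestvina--Handel train-track theory. I would first represent $\varphi$ by a train-track map $f:\Gamma\to\Gamma$; the iwip hypothesis guarantees that the transition matrix of $f$ is irreducible, hence has a Perron--Frobenius eigenvalue $\lambda>1$ together with a positive right eigenvector $v$. The vector $v$ endows $\Gamma$ with a metric graph structure with respect to which $f$ is a local homothety of ratio $\lambda$. Lifting to the universal cover via the simplicial chart provided by $\Gamma$ yields a tree $T_v\in cv(F_N)$ for which $\varphi T_v$ is equivariantly isometric to $\lambda T_v$; projectivizing gives a $\varphi$-fixed point $[T_+]$ which, because it is a limit of rescalings $\lambda^{-n}\varphi^n[T_v]$, lies on the Thurston boundary $\partial CV(F_N)$. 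Applying the same construction to $\varphi^{-1}$ produces $[T_-]$. The North--South dynamics is then the substantive content of \cite{LL}: for any $[T]\ne [T_-]$, one uses the uniform expansion of legal paths under $f$ and a bounded cancellation lemma to show $\lim_n\varphi^n[T]=[T_+]$.

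For part (2), the plan is dual, in the space of currents. The attracting lamination $\Lambda_+(\varphi)$ associated to $f$ supports a unique (up to positive scaling) transverse measure, which may be extracted from the left Perron eigenvector of the transition matrix of $f$; this transverse measure gives rise to a current $\mu_+\in Curr(F_N)$ satisfying $\varphi\mu_+=\lambda\mu_+$, and a symmetric construction for $\varphi^{-1}$ yields $\mu_-$. The atoroidal assumption is essential here: without it, a $\varphi$-periodic conjugacy class $[g]$ would produce a projectively $\varphi$-fixed rational current $\eta_{[g]}$ distinct from $[\mu_\pm]$ and thereby destroy any hope of a North--South picture. Granted this, the North--South convergence on $\mathbb{P}\,Curr(F_N)$ is carried out in \cite{Ma}: for $[\mu]\ne [\mu_-]$, the cylinder weights $\langle v,\varphi^n\mu\rangle_\alpha$ redistribute, as $n\to\infty$, onto subpaths read along $\Lambda_+$, forcing $\varphi^n[\mu]\to [\mu_+]$.

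The main obstacle in both parts is not the existence of the two projective fixed points, which falls out of Perron--Frobenius theory in an essentially algebraic way once a good train-track representative is in hand, but rather the proof of North--South convergence away from the repellers. This step requires separating a small neighborhood of $[T_-]$ (respectively $[\mu_-]$) from the rest of the relevant space and establishing uniform contraction estimates for forward iterates on the complement, which in turn rely on delicate train-track dynamics (splittings, legal turns, and the bounded back-tracking property of Section~3). These are exactly the ingredients supplied by \cite{LL} and \cite{Ma}, and I would refer the reader to those papers for the complete argument.
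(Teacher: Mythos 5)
Your proposal matches the paper exactly: Proposition~\ref{prop:ns} is stated there purely as a pair of cited results, part (1) from Levitt--Lustig \cite{LL} and part (2) from Reiner Martin \cite{Ma}, with no proof supplied, so deferring to those references is precisely what the paper does. One inaccuracy in your optional sketch is worth correcting: no tree $T_v\in cv(F_N)$ can satisfy $\varphi T_v=\lambda T_v$ equivariantly with $\lambda>1$ (backward iteration would then yield arbitrarily short translation lengths, impossible for a free discrete action); the train-track metric only gives $||\widehat\varphi(g)||_{T_v}\le \lambda\,||g||_{T_v}$, with equality on legal loops, and the projectively fixed point $[T_+]$ arises only as the limit of the rescaled orbit of $[T_v]$, which is exactly why it lies in $\partial CV(F_N)$ rather than in $CV(F_N)$.
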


\begin{conv}\label{conv:iwip}
For the remainder of this section, unless specified otherwise, let
$N\ge 3$ and let $\varphi\in Out(F_N)$ be irreducible with irreducible
powers, and without periodic conjugacy classes.  Let $[T_+], [T_{-}]\in
\partial CV(F_N)$ be the attracting and repelling fixed points for the
(left) action of $\varphi$ on $\overline {CV}(F_N)$.  Similarly, let
$[\mu_+], [\mu_{-}]\in \mathbb PCurr(F_N)$ be the attracting and
repelling fixed points for the action of $\varphi$ on $\mathbb
PCurr(F_N)$.
\end{conv}

\begin{rem}
\label{othersources}
(1) We would like to alert the reader that
the {\em forward limit tree} of $\varphi$, denoted in
\cite{CHL3} and \cite{Lu2} by $T_{\varphi}$,
is the tree $T_{-}$ (and not $T_{+}$).
This is due to the fact that in this paper $\varphi$ acts on $\R$-trees in
$\overline{CV}(F_{n})$ from the left, while \cite{CHL3} and \cite{Lu2}
in one considers the right-action (compare the discussion in Section~\ref{sect:cv}).

\smallskip
\noindent
(2) Some useful information about iwips and their induced action on Outer space
has been worked out in detail in \cite{Lu2}, \S 4 and \S 5. A summary of the
most important facts is given in \cite{CHL3}, Remark 5.5.

\smallskip
\noindent
(3) An alternative proof (relying on the main result of \cite{KL2})
for Proposition \ref{prop:non-zero} below is given by Proposition 5.6 of \cite{CHL3}.
\end{rem}

\begin{prop}\label{prop:non-zero}
Let $\varphi, T_{\pm}$ and $\mu_{\pm}$ be as in
Convention~\ref{conv:iwip}.  Then
$$\langle T_-,\mu_+\rangle\ne 0 \quad and \quad
\langle T_+,\mu_-\rangle\ne 0.$$
\end{prop}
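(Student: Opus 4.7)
My plan is to prove $\langle T_-, \mu_+\rangle \ne 0$ by an explicit limit computation exploiting the eigenstructure of $\varphi$; the symmetric statement $\langle T_+, \mu_-\rangle \ne 0$ then follows by applying the same argument to the iwip $\varphi^{-1}$ (whose attracting data are $T_+$ and $\mu_-$ and whose repelling data are $T_-$ and $\mu_+$).

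The first step is to invoke the sharpening of Proposition~\ref{prop:ns} provided by the train-track machinery behind \cite{LL} and \cite{Ma}: there is a single Perron-Frobenius expansion factor $\lambda = \lambda(\varphi) > 1$ and there are representatives of $[T_-]$ and $[\mu_+]$ for which
\[
\varphi T_- = \lambda^{-1} T_- \qquad \text{and} \qquad \varphi \mu_+ = \lambda \mu_+,
\]
and such that for every $T_0 \in cv(F_N)$ with $[T_0] \ne [T_+]$ the rescaled iterates converge at the unprojectivized level,
\[
\lambda^{-n}\, \varphi^{-n}\, T_0 \longrightarrow T_- \quad \text{in } \overline{cv}(F_N).
\]

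Granting this, the second step is a short calculation. Pick $T_0 \in cv(F_N)$ with $[T_0] \ne [T_+]$, for instance $T_0 = T_A$, and chain together the continuity, $\mathbb R_{\ge 0}$-homogeneity, $Out(F_N)$-invariance, and $\mathbb R_{\ge 0}$-linearity properties of Theorem~\ref{thm:int-form}, together with $\varphi \mu_+ = \lambda \mu_+$:
\begin{align*}
\langle T_-, \mu_+\rangle
&= \lim_{n\to\infty} \langle \lambda^{-n}\varphi^{-n} T_0, \mu_+\rangle \\
&= \lim_{n\to\infty} \lambda^{-n} \langle \varphi^{-n} T_0, \mu_+\rangle \\
&= \lim_{n\to\infty} \lambda^{-n} \langle T_0, \varphi^n \mu_+\rangle \\
&= \lim_{n\to\infty} \lambda^{-n}\lambda^n \langle T_0, \mu_+\rangle
= \langle T_0, \mu_+\rangle.
\end{align*}
Since $T_0 \in cv(F_N)$ corresponds to a simplicial chart with all edge lengths positive, part~(6) of Theorem~\ref{thm:int-form} expresses $\langle T_0, \mu_+\rangle$ as $\sum_e L(e)\langle e, \mu_+\rangle_\alpha$ with every $L(e)>0$; at least one cylinder mass is positive because $\mu_+$ is a nonzero Radon measure, so $\langle T_0, \mu_+\rangle > 0$ and therefore $\langle T_-, \mu_+\rangle > 0$.

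The main obstacle is the normalization asserted in the first step. From Proposition~\ref{prop:ns} alone one only gets $\varphi T_- = c T_-$ and $\varphi \mu_+ = c' \mu_+$ for some $c, c' > 0$ depending on the chosen representatives, and the $Out(F_N)$-invariance of the intersection form only forces $(1 - cc')\langle T_-, \mu_+\rangle = 0$, which is still compatible with $\langle T_-, \mu_+\rangle = 0$ whenever $cc' \ne 1$. The nontrivial content that $c$ and $c'$ can be matched so that $cc'=1$ via a common $\lambda$, and that $\lambda^{-n}\varphi^{-n} T_0$ converges in $\overline{cv}(F_N)$ rather than only projectively in $\overline{CV}(F_N)$, rests on the sharp Perron-Frobenius asymptotic $||\widehat\varphi^n(g)||_{T_0} \sim C(g)\lambda^n$ for nontrivial $g$ supplied by the train-track representatives of $\varphi$; once this common eigenvalue structure is in place, the rest of the argument is just a short manipulation of the formal properties of the intersection form.
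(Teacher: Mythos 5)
Your argument is correct, and it rests on the same external inputs as the paper's proof, but the internal computation is organized differently. The paper also works with the train track tree $T=\widetilde\Gamma$ carrying the Perron--Frobenius metric and its eigenvalue $\lambda$, and it imports exactly the two unprojectivized convergences that your ``normalization'' step needs: $\lambda^{-n}\varphi^{-n}T\to T_-$ (Remark 5.4 of \cite{Lu2}) and, after rescaling $\mu_+$, $\lambda^{-n}\eta_{\varphi^n(g)}\to\mu_+$ (R.~Martin \cite{Ma}); the latter is precisely what gives $\varphi\mu_+=\lambda\mu_+$ with the \emph{same} $\lambda$, so the crux you identify is genuine and is exactly the content the paper defers to the literature as well. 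Where you diverge: the paper feeds both rescaled sequences into the jointly continuous intersection form, obtains $\langle T_-,\mu_+\rangle=\lim_n\lambda^{-2n}||\varphi^{2n}(g)||_T$, and bounds this below using the two-sided Perron--Frobenius growth estimate $\tfrac1C\lambda^n\le ||\varphi^n(g)||_T\le C\lambda^n$ (whose lower half requires the legal-path/INP argument); you instead keep $\mu_+$ fixed, use its exact eigenvalue relation together with invariance, homogeneity and continuity from Theorem~\ref{thm:int-form} to collapse the pairing to $\langle T_0,\mu_+\rangle$ with $T_0\in cv(F_N)$, and finish with the soft observation that a nonzero current pairs strictly positively with any point of $cv(F_N)$ (your cylinder-mass argument via property (6) is fine, since the edge cylinders cover $\partial^2 F_N$). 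So your route trades the quantitative growth estimates on a conjugacy class for the exact eigen-relation of $\mu_+$; both are available from \cite{Ma} and the train track theory, and your version is arguably a slightly cleaner bookkeeping of the same idea.

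Two small repairs. First, take $T_0$ to be the train track tree itself rather than an arbitrary $T_0$ with $[T_0]\ne[T_+]$ (or $T_A$): the cited results give the unprojectivized convergence $\lambda^{-n}\varphi^{-n}T_0\to T_-$ verbatim only for that tree, and nothing in your argument needs more. Second, your parenthetical about $\varphi^{-1}$ has the tree labels swapped: the attracting tree of $\varphi^{-1}$ is $[T_-]$ and its repelling tree is $[T_+]$ (while $[\mu_-]$ is indeed its attracting current); what you actually apply to $\varphi^{-1}$ is the statement ``repelling tree pairs nontrivially with attracting current,'' now with common eigenvalue $\lambda(\varphi^{-1})$, and this does yield $\langle T_+,\mu_-\rangle\ne 0$ as claimed. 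Keeping the eigenvalues straight matters here, since in general $\lambda(\varphi^{-1})\ne\lambda(\varphi)$; it is precisely the pairs $(T_-,\mu_+)$ and $(T_+,\mu_-)$ that are governed by a single eigenvalue each, which is why your cancellation $\lambda^{-n}\lambda^{n}$ is legitimate.
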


\begin{proof}

Let $\alpha: F_N\to \pi_1(\Gamma)$ be a marked
graph structure on $F_N$,
given by a train track map on $\Gamma$ that represents $\varphi$, with a
metric structure $L$ on the edges of $\Gamma$
given by the Perron-Frobenius eigen-vector of the transition matrix (see \cite{BH92} or  Section~3 of \cite{Lu2} for a detailed exposition).
Let $T=\widetilde \Gamma \in cv(F_N)$ be the discrete $\mathbb R$-tree given by the universal
covering of $\Gamma$, provided with the metric $d_L$ given by the
lift of $L$, and with the action of $F_N$ coming from the marking
$\alpha$.

Let $\lambda>1$ be the train-track stretching constant for $\Gamma$
(i.e. the Perron-Frobenius eigen-value of the transition matrix of the
train track map).
It is known (see, for example, Remark 5.4 of \cite{Lu2})
that $\underset{n \to \infty}{\lim} \frac{1}{\lambda^n}
\varphi^{-n}T=T_-$.

Let $g\in F_N$, $g\ne 1$ be arbitrary.  Then there exists constants
$C>1$  and $n_{0} \in \mathbb N$ such
that for every $n\ge n_{0}$ we have
\[
 \frac{1}{C}\lambda^n\le ||\varphi^n(g)||_{T}\le C\lambda^n.
\]

The upper bound is derived in Section~3 of \cite{Lu2} before Remark 3.4:
The inequality becomes an equality if $g$ is represented by a legal
loop. The lower bound follows from
the fact that every path in $\Gamma$ has an iterate (under the train
track map) that is a legal composition of legal paths and INP's, see
\cite{BH92}.

Note that $||\varphi^n(g)||_{T}=||g||_{\varphi^{-n} T}$.  It was proved by
Reiner Martin~\cite{Ma} that $\underset{n \to \infty}{\lim}[\varphi^n
\eta_g]=[\mu_+]$ and, moreover, that, after possibly multiplying
$\mu_+$ by a positive scalar, we have $\underset{n \to \infty}{\lim}
\frac{1}{\lambda^n}\eta_{\varphi^n(g)}=\mu_+$.
We compute:
\begin{gather*}
\langle T_-,\mu_+\rangle
=\underset{n \to \infty}{\lim} \langle \frac{1}{\lambda^n}
\varphi^{-n}T, \frac{1}{\lambda^n}\eta_{\varphi^n(g)}\rangle \\
=\underset{n \to \infty}{\lim}\frac{1}{\lambda^{2n}}\langle \varphi^{-n}T,
\eta_{\varphi^n(g)}\rangle
=\underset{n \to \infty}{\lim}\frac{1}{\lambda^{2n}}\langle
T, \varphi^{n}\eta_{\varphi^n(g)}\rangle \\
=\underset{n \to \infty}{\lim}\frac{1}{\lambda^{2n}}\langle T,
\eta_{\varphi^{2n}(g)}\rangle
=\underset{n \to \infty}{\lim}\frac{1}{\lambda^{2n}}
||\varphi^{2n}(g)||_T \\
\ge
\underset{n \to \infty}{\lim}\frac{1}{\lambda^{2n}} C\lambda^{2n}
=C>0
\end{gather*}

Replacing $\varphi$ by $\varphi^{-1}$
we conclude that $\langle T_+,\mu_-\rangle>0$ as well.

\end{proof}

\begin{prop}\label{prop:key}
Let $[T_n]\in \overline {CV}(F_N)$ and
$[\mu_n]\in \mathbb PCurr(F_N)$ be sequences such that
$$\langle [T_n],[\mu_n]\rangle=0$$
for every $n\ge 1$.  Then we have:
$$\displaystyle\underset{n \to \infty}{\lim} [T_n]=[T_+]
\,\,\, \Longleftrightarrow \,\,\,
\displaystyle\underset{n \to \infty}{\lim} [\mu_n]=[\mu_+].$$
\end{prop}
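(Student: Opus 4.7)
The plan is to prove both implications by a compactness-plus-backwards-iteration argument. The tools I will use are: continuity of the intersection form at its zero locus (Lemma~\ref{lem:0}), the North-South dynamics of Proposition~\ref{prop:ns}, the non-vanishing statements $\langle T_-,\mu_+\rangle\ne 0$ and $\langle T_+,\mu_-\rangle\ne 0$ from Proposition~\ref{prop:non-zero}, the $Out(F_N)$-invariance from Theorem~\ref{thm:int-form}(4), and compactness of $\overline{CV}(F_N)$ and $\mathbb{P}Curr(F_N)$.

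For the implication $\lim [\mu_n]=[\mu_+] \Rightarrow \lim [T_n]=[T_+]$, I would first reduce, by compactness of $\overline{CV}(F_N)$, to showing that every convergent subsequence $[T_{n_k}]\to [T^*]$ satisfies $[T^*]=[T_+]$. Given such a subsequence, Lemma~\ref{lem:0} immediately gives
\[
\langle [T^*],[\mu_+]\rangle = 0.
\]
The key step is then to argue that this identity forces $[T^*]=[T_+]$.

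To prove this step, suppose for contradiction that $[T^*]\ne [T_+]$. By Proposition~\ref{prop:ns}(1) we then have $\varphi^{-m}[T^*]\to [T_-]$ as $m\to\infty$. Since $\varphi$ fixes $[\mu_+]$ and the vanishing locus $\{\langle\cdot,\cdot\rangle=0\}$ is invariant under rescaling and under the $Out(F_N)$-action, this yields
\[
\langle [\varphi^{-m}T^*],[\mu_+]\rangle = \langle [\varphi^{-m}T^*],[\varphi^{-m}\mu_+]\rangle = \langle [T^*],[\mu_+]\rangle = 0
\]
for every $m\ge 1$. Sending $m\to\infty$ and invoking Lemma~\ref{lem:0} once more would give $\langle [T_-],[\mu_+]\rangle = 0$, contradicting Proposition~\ref{prop:non-zero}. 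Hence $[T^*]=[T_+]$ as required.

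The converse implication $\lim [T_n]=[T_+]\Rightarrow \lim[\mu_n]=[\mu_+]$ follows by the completely symmetric argument: pass to a subsequential limit $[\mu^*]$ of $[\mu_n]$ using compactness of $\mathbb{P}Curr(F_N)$, deduce $\langle [T_+],[\mu^*]\rangle=0$ from Lemma~\ref{lem:0}, and if $[\mu^*]\ne [\mu_+]$, apply $\varphi^{-m}$ (which fixes $[T_+]$ and, by Proposition~\ref{prop:ns}(2), sends $[\mu^*]$ to $[\mu_-]$) to obtain $\langle[T_+],[\mu_-]\rangle=0$, contradicting the other half of Proposition~\ref{prop:non-zero}. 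I expect the only subtle point to be recognising that one must iterate \emph{backwards} by $\varphi$: this is precisely what makes a putative non-attractor limit migrate to the repelling fixed point, where the non-vanishing statement of Proposition~\ref{prop:non-zero} is available to produce the contradiction.
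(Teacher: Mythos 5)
Your proposal is correct and follows essentially the same route as the paper's own proof: pass to a subsequential limit using compactness, apply Lemma~\ref{lem:0} to get vanishing of the intersection number, iterate backwards by $\varphi$ (which fixes the relevant attracting point and drives the putative wrong limit to the repelling fixed point by Proposition~\ref{prop:ns}), and contradict Proposition~\ref{prop:non-zero}. The only cosmetic difference is that you write out both implications, whereas the paper proves one and notes the other is symmetric.
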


\begin{proof}
Let $\underset{n \to \infty}{\lim} [T_n]=[T_+]$.  Suppose that
$\underset{n \to \infty}{\lim} [\mu_n]\ne [\mu_+]$.

Since $\mathbb PCurr(F_N)$ is compact,
after passing to a subsequence we may assume that $\underset{n \to
\infty}{\lim} [\mu_n]=[\mu]$ for some $[\mu]\ne [\mu_+]$ in $\mathbb
PCurr(F_N)$.  Note that by Lemma~\ref{lem:0} we have $\langle
[T_+],[\mu]\rangle=0$.  Since $[\mu]\ne [\mu_+]$,
part (2) of Proposition \ref{prop:ns} implies that
$\underset{n \to \infty}{\lim} \varphi^{-n}[\mu]=[\mu_{-}]$.  Note that $[T_+]$ is
fixed by $\varphi^{-1}$, and that we have
$$\langle [T_+],
\varphi^{-n}[\mu]\rangle=\langle
\varphi^{-n}[T_+], \varphi^{-n}[\mu]\rangle=\langle
[T_+],[\mu]\rangle=0$$
for every $n\ge 1$.  Hence Lemma~\ref{lem:0} implies
$\langle [T_+],[\mu_{-}]\rangle=0$.
This is to say that $\langle T_+,\mu_{-}\rangle=0$, yielding a contradiction with
Proposition~\ref{prop:non-zero}.  Thus $\underset{n \to \infty}{\lim}
[\mu_n]=[\mu_+]$.

The argument for the other direction is completely symmetric.
\end{proof}

\section{Curve complex analogues for free groups}

\subsection{The bipartite intersection graph}

\begin{defn}[Intersection graph]
Let $\mathcal I=\mathcal I(F_N)$ be a bipartite graph defined as
follows.  The vertex set of $\mathcal I$ is $V\mathcal I=\overline
{CV}(F_N)\cup \mathbb PCurr(F_N)$.  Two vertices $[T]\in \overline
{CV}(F_N)$ and $[\mu]\in \mathbb PCurr(F_N)$ are connected by an edge
in $\mathcal I$ if and only if $\langle [T],[\mu]\rangle=0$.
\end{defn}

Since the intersection form is $Out(F_N)$-invariant, the
graph $\mathcal I(F_N)$ comes equipped with a natural action of $Out(F_N)$
by graph automorphisms. 

It is not hard to show that for $N\ge 3$ there is a single connected
component $\mathcal I_0(F_N)$ of $\mathcal I(F_N)$ containing all
$[T]$ for the Bass-Serre trees $T$ corresponding to all nontrivial
free product decompositions of $F_N$. Moreover, the same connected
component $\mathcal I_0(F_N)$  also contains $\eta_a$ for all
nontrivial $a\in F_N$ that belong to some proper free factors of
$F_N$. It is also not hard to show that $\mathcal I_0(F_N)$ is $Out(F_N)$-invariant.

Note also that there are many connected components in this graph.
Indeed, every vertex $[T]\in {CV}(F_N)$ is an isolated point, and it
follows from \cite{CHL3} that many pairs $([T], [\mu])$ form a single edge connected component, in particular all pairs
$([T_{+}], [\mu_{+}])$ as in Convention \ref{conv:iwip}.

\begin{prop}\label{prop:inf}
Let $[T_n],[T]\in \overline {CV}(F_N)$ be such that $[T]\ne [T_+]$ and
that $\underset{n \to \infty}{\lim}[T_n]=[T_+]$, for $[T_{+}]$ as in
Convention \ref{conv:iwip}.
Then in the graph $\mathcal I$
we have:
$$\underset{n \to \infty}{\lim} d_{\mathcal I}([T_n],[T])=\infty.$$
\end{prop}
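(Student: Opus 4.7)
The plan is a proof by contradiction that exploits the bipartite structure of $\mathcal{I}$ together with the dynamical dichotomy encoded in Proposition~\ref{prop:key}. Suppose the conclusion fails. Then there exist a constant $k$ and a subsequence (which I relabel $([T_n])$) with $d_{\mathcal{I}}([T_n],[T])\le k$ for all $n$. Since $[T]$ and each $[T_n]$ are tree-vertices of the bipartite graph $\mathcal{I}$, every connecting path has even length, and by pigeonhole I can pass to a further subsequence on which $d_{\mathcal{I}}([T_n],[T])$ equals a fixed even integer $k'\le k$.

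For each such $n$, I will fix a path
\[
[T_n]=x_0^n,\ x_1^n,\ x_2^n,\ \ldots,\ x_{k'}^n=[T]
\]
in $\mathcal{I}$, where $x_i^n\in\overline{CV}(F_N)$ for $i$ even and $x_i^n\in\mathbb{P}Curr(F_N)$ for $i$ odd, and $\langle x_i^n,x_{i+1}^n\rangle=0$ for $0\le i<k'$. I will then show by induction on $i$ that $x_i^n\to [T_+]$ when $i$ is even and $x_i^n\to [\mu_+]$ when $i$ is odd. The base case $i=0$ is precisely the hypothesis $[T_n]\to [T_+]$. For the inductive step, if $i$ is even and $x_i^n\to [T_+]$, then the pairs $(x_i^n,x_{i+1}^n)$ have zero intersection for every $n$, so Proposition~\ref{prop:key} (applied in the $\Rightarrow$ direction) forces $x_{i+1}^n\to [\mu_+]$. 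Symmetrically, if $i$ is odd and $x_i^n\to [\mu_+]$, then viewing $(x_{i+1}^n,x_i^n)$ as a tree-current sequence with zero intersection, the $\Leftarrow$ direction of Proposition~\ref{prop:key} gives $x_{i+1}^n\to [T_+]$.

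Applying this claim to $i=k'$ (which is even), I conclude $x_{k'}^n\to [T_+]$; but $x_{k'}^n=[T]$ for every $n$, so $[T]=[T_+]$, contradicting the standing assumption $[T]\ne [T_+]$.

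There is no genuine obstacle in executing the argument: Proposition~\ref{prop:key} does essentially all of the work by propagating convergence toward $[T_+]$ (respectively $[\mu_+]$) along each edge of a zero-intersection walk, and the bipartite structure ensures that after an even number of steps one returns to a tree-vertex that must also converge to $[T_+]$. The only delicate point worth emphasizing is that Proposition~\ref{prop:key} does not require the other sequence to converge \emph{a priori}; it both extracts convergence and identifies the limit, which is exactly what makes the inductive step go through without any auxiliary compactness extraction.
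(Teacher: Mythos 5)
Your argument is correct and rests on exactly the same engine as the paper's proof: after passing to a subsequence realizing a fixed even distance, you propagate convergence to $[T_+]$ (resp.\ $[\mu_+]$) along each edge of a zero-intersection path via both directions of Proposition~\ref{prop:key}, including the correct observation that this proposition forces convergence of the adjacent sequence rather than assuming it. The paper packages the same induction as a minimal-counterexample descent on $\max_n d_{\mathcal I}([T_n],[T])$, shortening paths by two at a time, so your direct induction along a fixed geodesic is only an organizational variant, not a different method.
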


\begin{proof}
Suppose that the statement of the lemma fails.  Then there exists a
sequence $[T_n]\in \overline{CV}(F_N)$ with
$\underset{n \to \infty}{\lim}[T_n]=[T_+]$, such that $\underset{n\ge
1}{\max}\, \, d_\mathcal
I([T_n],[T])<\infty$.  Among all sequences $[T_n]\in
\overline{CV}(F_N)$ satisfying $\underset{n \to \infty}{\lim}[T_n]=[T_+]$ and
$\underset{n\ge 1}{\max}\,\, d_\mathcal I([T_n],[T])<\infty$, choose a sequence
$[T_n]$ such that $\underset{n\ge 1}{\max}\,\, d_\mathcal I([T_n],[T])$ is the
smallest possible.

Let $D=\underset{n\ge 1}{\max}\,\, d_\mathcal I([T_n],[T])$.  Suppose that $D>0$.
Then, after passing to a further subsequence, we may assume that
$[T_n]\ne [T]$ for every $n\ge 1$.  Note that by definition of the
graph $\mathcal I$, the numbers $D$ and $d_\mathcal I([T_n],[T])$ are
positive even integers.  By definition of $\mathcal I$ it follows that
there exist $[T_n']\in \overline{CV}(F_N)$ such that $d_\mathcal
I([T_n],[T_n'])=2$ and $d_\mathcal I([T_n'],[T])=d_\mathcal
I([T_n],[T])-2$.  Hence, again by definition of $\mathcal I$, there
exists a sequence $[\mu_n]\in \mathbb PCurr(F_N)$ such that $\langle
[T_n],[\mu_n]\rangle=0=\langle [T_n'],[\mu_n]\rangle$.  Since
$\underset{n \to \infty}{\lim}[T_n]=[T_+]$
and $\langle [T_n],[\mu_n]\rangle=0$,
Proposition~\ref{prop:key} implies that
$\underset{n \to \infty}{\lim}[\mu_n]=[\mu_+]$.  Since $\langle
[T_n'],[\mu_n]\rangle=0$, Proposition~\ref{prop:key} then implies that
$\underset{n \to \infty}{\lim}[T_n']=[T_+]$.  Thus $\underset{n \to \infty}{\lim}[T_n']=[T_+]$
and $\underset{n\ge 1}{\max}\,\, d_\mathcal I([T_n'],[T])=D-2<D=\underset{n\ge 1}{\max}\,\,
d_\mathcal I([T_n'],[T])$.  This contradicts the minimality in the
choice of $[T_n]$.  Therefore we conclude $D=0$.
Thus $0=\underset{n\ge 1}{\max}\,\, d_\mathcal I([T_n],[T])$ and hence $[T_n]=[T]$ for every $n\ge 1$.
This contradicts the assumptions that $[T]\ne [T_+]$ and that
$\underset{n \to \infty}{\lim}[T_n]=[T_+]$.
\end{proof}

Proposition~\ref{prop:inf} and Proposition~\ref{prop:ns} immediately
imply:

\begin{cor}\label{cor:iwip}
Let
$\varphi, [T_{+}]$ and $[T_{-}]$ be as in
Convention \ref{conv:iwip},
and let
$[T]\in \overline {CV}(F_N)$ be such that $[T]\ne [T_+], [T_-]$.
Then in the intersection graph $\mathcal I =\mathcal I(F_{N})$ we have:
$$\underset{n \to \infty}{\lim} d_{\mathcal
I}(\varphi^n[T],[T])=\infty$$
\end{cor}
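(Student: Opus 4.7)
The plan is to combine Proposition~\ref{prop:ns}(1) with Proposition~\ref{prop:inf} in a direct way; no new ingredient is needed. The corollary follows almost by unwinding definitions, so my proposal is short.

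First I would set $[T_n] := \varphi^n[T]$ for $n \geq 1$ and aim to apply Proposition~\ref{prop:inf} to the sequence $([T_n])_n$ and the fixed vertex $[T]$. The first hypothesis needed is that $[T_n] \to [T_+]$ in $\overline{CV}(F_N)$. This is exactly where the assumption $[T] \ne [T_-]$ enters: Proposition~\ref{prop:ns}(1) (the ``North-South'' dynamics of $\varphi$ on $\overline{CV}(F_N)$) gives $\lim_{n\to\infty} \varphi^n[T] = [T_+]$ whenever $[T] \ne [T_-]$. The second hypothesis needed for Proposition~\ref{prop:inf} is that $[T] \ne [T_+]$, which is the other half of our assumption.

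With both hypotheses in place, Proposition~\ref{prop:inf} applied to the sequence $([T_n])_n = (\varphi^n[T])_n$ and the vertex $[T] \in \overline{CV}(F_N)$ yields
\[
\lim_{n \to \infty} d_{\mathcal{I}}(\varphi^n[T], [T]) \;=\; \lim_{n \to \infty} d_{\mathcal{I}}([T_n], [T]) \;=\; \infty,
\]
which is the conclusion of the corollary.

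There is no real obstacle here: all the work has already been absorbed into Proposition~\ref{prop:inf}, whose proof in turn rests on Proposition~\ref{prop:key} and on the non-vanishing statement $\langle T_+, \mu_-\rangle \ne 0$ (Proposition~\ref{prop:non-zero}). The only subtlety worth flagging is that one should use Proposition~\ref{prop:ns}(1) in the form stated (the left action of $\varphi$), so that $[T_+]$ is genuinely the forward attractor of $\varphi$ and $[T_-]$ the backward one, consistent with Convention~\ref{conv:iwip}; otherwise the roles of $[T_+]$ and $[T_-]$ would swap and the hypothesis $[T] \ne [T_-]$ would be the wrong one to invoke.
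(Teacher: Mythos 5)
Your proposal is correct and is precisely the argument the paper intends: the corollary is stated as an immediate consequence of Proposition~\ref{prop:ns}(1) (giving $\varphi^n[T]\to[T_+]$ since $[T]\ne[T_-]$) combined with Proposition~\ref{prop:inf} (applicable since $[T]\ne[T_+]$). Your remark about using the left-action convention so that $[T_+]$ is the forward attractor is a sensible check but introduces nothing beyond the paper's own reasoning.
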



\subsection{Other curve complex analogues}\ 

\medskip

One can define several other natural free group analogues of the curve
complex. Each of them will admit an $Out(F_N)$-equivariant Lipschitz
map into the intersection graph

\begin{defn}
\label{othercomplexes}
\rm
Let $N\ge 3$.
\begin{enumerate}
\item The \emph{free splitting graph} $\mathcal F=\mathcal{F}(F_N)$ is
  a simple graph whose vertices are non-trivial splitting of $F_N$ as
  the fundamental group of a graph-of-groups with a single non-loop
  edge and trivial edge group (so algebraically they correspond to
  nontrivial free product decompositions of $F_N$).  Two such
  splittings are considered equal if their Bass-Serre trees are
  $F_N$-equivariantly isometric, that is, if they equal as points of
  $cv(F_N)$. Two distinct splittings $T_1,T_2\in V\mathcal F(F_N)$ are
  adjacent in $\mathcal F$ if there exists a splitting of $F_N$ as the
  fundamental group of a graph-of-groups with two (non-loop) edges and
  trivial edge groups, such that $T_1$ is obtained by collapsing one
  edge of this graph-of-groups, and $T_2$ is obtained by collapsing
  the other edge.
\item The \emph{cut graph} $\mathcal S=\mathcal{S}(F_N)$ is
  a simple graph whose vertices are non-trivial splitting of $F_N$ as
  the fundamental group of a graph-of-groups with a single
  edge (possible a loop edge) and trivial edge group. Again, two such
  splittings are considered equal if their Bass-Serre trees are
  $F_N$-equivariantly isometric, that is, if they equal as points of
  $cv(F_N)$. Two distinct splittings $T_1,T_2\in V\mathcal F(F_N)$ are
  adjacent in $\mathcal F$ if there exists a splitting of $F_N$ as the
  fundamental group of a graph-of-groups with two (non-loop) edges and
  trivial edge groups, such that $T_1$ is obtained by collapsing one
  edge of this graph-of-groups, and $T_2$ is obtained by collapsing
  the other edge.
\item The \emph{dual free splitting graph} $\mathcal
  F^\ast=\mathcal{F}^\ast(F_N)$ has the same vertex set as
  $\mathcal{F}(F_N)$. Two distinct vertices of $\mathcal{F}^\ast(F_N)$
  corresponding to splittings $T_1$ and $T_2$ are adjacent whenever
  there exists a nontrivial element $g\in F_N$ such that
  $||g||_{T_1}=||g||_{T_2}=0$. One can show that this adjacency
  condition is equivalent to saying that there exists a nontrivial
  \emph{primitive} (i.e. a member of a free basis of $F_N$) element
  $a$ of $F_N$ such that $||a||_{T_1}=||a||_{T_2}=0$.
\item The \emph{dual cut graph}  $\mathcal
  S^\ast=\mathcal{S}^\ast(F_N)$ has the same vertex set as $\mathcal
  S(F_N)$. Two distince vertices are adjacent in
  $\mathcal{S}^\ast(F_N)$ whenever the corresponding splittings of
  $F_N$ have some common nontrivial elliptic element.
  
\item The \emph{ellipticity graph} $\mathcal Z=\mathcal Z (F_N)$
  is a bipartite graph. Its vertex set is the disjoint union of the
  vertex set of $\mathcal{S}(F_N)$ and the set of all $F_N$-conjugacy
  classes $[a]$ of nontrivial elements $a\in F_N$. A vertex $[a]$ is
  adjacent to a vertex $T$ whenever $a$ is an elliptic element with
  respect to $T$, that is $||a||_T=0$ (algebraically, this means that
  $a$ is conjugate to a vertex group element for the free product
  splitting $T$). 
\item The \emph{free factor graph} $\mathcal J=\mathcal J(F_N)$ is a
  simple graph defined as follows: The vertex set of of $\mathcal J$
  is the set of conjugacy classes in $F_N$ of all free factors $A$ of
  $F_N$ such that $A\ne 1, A\ne F_N$. Two distinct vertices $x,y\in
  V\mathcal J$ are adjacent in $\mathcal J$ if and only if for some
  $A,B$ with $[A]=x$ and $[y]=B$ there exists $C\le F_N$ such that
  $F_N=A\ast B\ast C$.  Note that we allow the case where $C=1$.

\item The \emph{dominance graph}  $\mathcal D=\mathcal
D(F_N)$ is defined as follows.  Put $V\mathcal D=V\mathcal J$.  For
distinct $x,y\in V\mathcal D$ we say that $x,y$ are adjacent in
$\mathcal D$ if and only if there exist $A,B\le F_N$ such that
$x=[A]$, $y=[B]$ and such that either $A\le B$ or $B\le A$. The
dominance graph is precisely the one-skeleton of the ``complex of free
factors'' $CF_N$ whose homotopy properties have been studied by
Hatcher and Vogtmann~\cite{HV}.

\item The \emph{primitivity graph} $\mathcal P(F_N)$ whose vertices
  are conjugacy classes of primitive elements of $F_N$ and where two
  such conjugacy classes are adjacent whenever there
  exist a free basis $X$ of $F_N$ and  some representatives $a_1$,
  $a_2$ of these conjugacy classes such that $a_1,a_2\in X$. 
\end{enumerate}
\end{defn}

It is not hard to see that for $N\ge 3$ all of these graphs are
connected and they come equipped with natural $Out(F_N)$-actions.
Moreover, with a bit of work, one can show that for a given $N\ge 3$
there are at most two substantially distinct objects among the above
graphs. 

More specifically,
the graphs $\mathcal F(F_N)$ and $\mathcal S(F_N)$ are
quasi-isometric (this is almost immediate from the definitions).
Moreover, for $N\ge 3$ the graphs $\mathcal F^\ast(F_N)$, $\mathcal
S^\ast(F_N)$, $\mathcal Z(F_N)$, $\mathcal J(F_N)$, $\mathcal D(F_N)$
and $\mathcal P(F_N)$ are all quasi-isometric.  Also, the full
subgraph of $\mathcal Z(F_N)$ induced by all the vertices coming from
$\mathcal{S}(F_N)$ together with conjugacy classes of primitive
elements of $F_N$ can be shown to be a co-bounded subset of $\mathcal
Z(F_N)$ and is thus quasi-isometric to $\mathcal Z(F_N)$.

Note that there are natural
$Out(F_N)$-equivariant Lipshitz maps $j:\mathcal F(F_N)\to \mathcal
F^*(F_N)$ and $q: \mathcal F^*(F_N)\to \mathcal I_0(F_N)$. The map $j$
is the identity map on the vertex set of $\mathcal F(F_N)$ (recall
that by definition the vertex sets of $\mathcal F(F_N)$ and $\mathcal
F^\ast(F_N)$ are equal).  The map $q$ sends a vertex $T$ of $\mathcal
F^\ast(F_N)$ to a vertex $[T]$ of $\mathcal I_0(F_N)$.

Note that if $T_1$ and $T_2$
are adjacent in $\mathcal F(F_N)$ and the tree $T$ corresponds to
their common refinement, then for any nontrivial elliptic element $a$
for $T$ we have $||a||_T=||a||_{T_1}=||a||_{T_2}=0$. Hence $T_1$ and
$T_2$ are adjacent in $\mathcal F^\ast(F_N)$ and therefore the map $j$
is $1$-Lipshitz.

Similarly, suppose that two vertices $T_1$ and $T_2$ of $\mathcal
F^\ast(F_N)$ are adjacent in $\mathcal
F^\ast(F_N)$. Hence there exists a nontrivial element $a\in F_N$ such
that $||a||_{T_1}=||a||_{T_2}=0$. Hence by the properties of the
intersection form $\langle T_i,\eta_a\rangle=||a||_{T_i}=0$ for
$i=1,2$. Hence both $[T_1]$ and $[T_2]$ are adjacent to $[\eta_a]$ in
$\mathcal I(F_N)$ which implies that the map $q$ is
$2$-Lipshitz.

It appears (although we do not know how to prove this) that the map $j$, although Lipshitz, is not a
quasi-isometry, and that the fibers of $j$ have infinite diameter as
subsets of  $\mathcal F^*(F_N)$.

Since the maps $q$, $j$ and $q\circ j$ are Lipshitz,
Corollary~\ref{cor:iwip} immediately implies analogous statements for
the above graphs:

\begin{cor}\label{cor:diam}
Let $N\ge 3$ and let $\varphi\in Out(F_N)$ be an atoriodal iwip.
Let $\mathcal Y$ be one of the graphs $\mathcal F(F_N)$, $\mathcal
F^*(F_N)$.  Then for any vertex
$x$ of $\mathcal Y$ we have:
\[\underset{n \to \infty}{\lim} d_{\mathcal
Y}(\varphi^nx,x)=\infty.\]
In particular, $diam(\mathcal Y)=\infty$.
\end{cor}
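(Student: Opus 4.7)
The plan is to pull back the divergence statement of Corollary~\ref{cor:iwip} from the intersection graph $\mathcal I=\mathcal I(F_N)$ to $\mathcal Y$ along the $Out(F_N)$-equivariant Lipshitz map $f\colon \mathcal Y\to \mathcal I_0(F_N)$ constructed in the preceding discussion. Concretely, I would take $f=q$ when $\mathcal Y=\mathcal F^\ast(F_N)$ and $f=q\circ j$ when $\mathcal Y=\mathcal F(F_N)$, with Lipshitz constant $L$ (at most $2$ in either case). Using the equivariance $f(\varphi^n x)=\varphi^n f(x)$, for every vertex $x$ of $\mathcal Y$ and every $n\ge 1$ we would get
\[
d_{\mathcal Y}(\varphi^n x,x)\;\ge\;\tfrac{1}{L}\, d_{\mathcal I}\bigl(\varphi^n f(x),\,f(x)\bigr),
\]
so it suffices to verify the hypothesis of Corollary~\ref{cor:iwip} for the point $f(x)$, that is, to check $f(x)\ne [T_+]$ and $f(x)\ne[T_-]$.

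Verifying this non-equality is the only substantial step. By definition every vertex $x$ of $\mathcal F(F_N)$ or $\mathcal F^\ast(F_N)$ is the Bass-Serre tree of a non-trivial free product decomposition $F_N=A\ast B$ with $A,B\ne\{1\}$; in particular it is a simplicial $F_N$-tree with non-trivial vertex stabilizers, and its $F_N$-orbits are discrete. On the other hand, since $\varphi$ is an \emph{atoroidal} iwip, the fixed points $[T_\pm]\in\partial CV(F_N)$ are represented by very small $F_N$-trees on which the action is free and has dense $F_N$-orbits -- a standard feature of the attracting/repelling eigentrees of a hyperbolic iwip (see e.g.\ Remark~\ref{othersources} and the references therein, in particular \cite{LL}). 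Consequently $[T_\pm]$ cannot coincide with the projective class of any Bass-Serre tree of a free splitting, and therefore $f(x)\ne[T_\pm]$ for every vertex $x$ of $\mathcal Y$.

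Combining these two ingredients, Corollary~\ref{cor:iwip} gives $\lim_{n\to\infty}d_{\mathcal I}(\varphi^n f(x),f(x))=\infty$, and the displayed inequality then forces $\lim_{n\to\infty}d_{\mathcal Y}(\varphi^n x,x)=\infty$; taking any vertex $x$ of $\mathcal Y$ yields $\mathrm{diam}(\mathcal Y)=\infty$. The only real obstacle I foresee is of a citation nature rather than a computational one, namely pinning down the structural fact that the eigentrees of an atoroidal iwip are never simplicial free-splitting trees; everything else is a formal consequence of equivariance, the Lipshitz property of $f$, and Corollary~\ref{cor:iwip}.
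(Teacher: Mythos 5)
Your proposal is correct and follows essentially the same route as the paper: the paper's proof of Corollary~\ref{cor:diam} is exactly the observation that the $Out(F_N)$-equivariant Lipshitz maps $j$, $q$ and $q\circ j$ transport the divergence statement of Corollary~\ref{cor:iwip} from $\mathcal I(F_N)$ back to $\mathcal F(F_N)$ and $\mathcal F^*(F_N)$. The only difference is that you spell out the (implicitly used) hypothesis check that the image vertex $f(x)$, being the projective class of a simplicial Bass--Serre tree of a free splitting, cannot equal $[T_\pm]$, which for an atoroidal iwip are free actions with dense orbits; this is a correct and welcome addition, not a departure from the paper's argument.
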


The above corollary, together with Corollary~\ref{cor:iwip}, implies
Theorem~\ref{thm:B} from the introduction.


\end{document}